\date{\today}
\newtheorem{theorem}{Теорема}%[section]
\newtheorem{proposition}{Твердження}
\newtheorem{corollary}{Наслiдок}
\newtheorem{lemma}{Лема}
\theoremstyle{definition}
\newtheorem{remark}{Зауваження}%[section]
\begin{document}

\title[Полiциклiчнi розширення напiвгруп]{Полiциклiчнi розширення напiвгруп}

\author[Олег~Гутік, Павло Хилинський]{Олег~Гутік, Павло Хилинський}
\address{Механіко-математичний факультет, Львівський національний університет ім. Івана Франка, Університецька 1, Львів, 79000, Україна}
\email{oleg.gutik@lnu.edu.ua,
ovgutik@yahoo.com, pavlo.khylynskyi@lnu.edu.ua}

\keywords{semigroup, polycyclic monoid, extension, semitopological semigroup, topological semigroup}
\subjclass[2010]{20M15,  20M50, 18B40.}

\begin{abstract}
Вводимо поняття $\lambda$-поліциклічного розширення Брука-Рейлі моноїда $S$ із визначеним гомоморфізмом $\theta$, яке є аналогом розширення Брука-Рейлі моноїда $S$. Описуємо ідемпотенти напівгрупи $\left(\mathscr{P}_{\lambda}(\theta,S),*\right)$ та відношення Ґріна на $\left(\mathscr{P}_{\lambda}(\theta,S),*\right)$. Доводимо, що $\left(\mathscr{P}_{\lambda}(\theta,S),*\right)$ --- $0$-проста напівгрупа для довільної напівгрупи $S$. Знайдено необхідні та достатні умови на моноїд $S$ i гомоморфізм $\theta$, за виконання яких напівгрупа $\left(\mathscr{P}_{\lambda}(\theta,S),*\right)$ є регулярною, інверсною, $0$-біпростою, комбінаторною, конгруенц-простою, чи інверсною 0-E-унітарною. Також вивчаємо топологізації напівгрупи $\left(\mathscr{P}_{\lambda}(\theta,S),*\right)$.

\emph{\textbf{Ключові слова:}} Напівгрупа, поліциклічний моноїд, розширення, напівтопологічна напівгрупа, топологічна напівгрупа.

\bigskip
\noindent
\emph{Oleg Gutik, Pavlo Khylynskyi, \textbf{Pjlycyclic extensions of semigroups},} Visnyk Lviv. Univ. Ser. Mech. Math. \textbf{90} (2020), 20--47.

\smallskip
\noindent
In the paper we introduce a notion of the Bruck-Reilly $\lambda$-polycyclic extension of a monoid $S$ with a homomorphism $\theta$ which is an analogue of the Bruck-Reilly extension of a monoid $S$. We describe idempotens of the semigroup $\left(\mathscr{P}_{\lambda}(\theta,S),*\right)$ and Green's relations on $\left(\mathscr{P}_{\lambda}(\theta,S),*\right)$. It is proved that $\left(\mathscr{P}_{\lambda}(\theta,S),*\right)$ is a $0$-simple semigroup for any semigroup $S$. We find necessary and sufficient conditions on a monoid $S$ and a homomorphism $\theta$ under which  the semigroup $\left(\mathscr{P}_{\lambda}(\theta,S),*\right)$ is regular, inverse, $0$-bisimple, combinatorial, congruence free, or inverse 0-E-unitary. Also we study topologizations of the semigroup $\left(\mathscr{P}_{\lambda}(\theta,S),*\right)$.
\end{abstract}

\maketitle

%\section{Термінологія та означення}

\section{\textbf{Вступ. Термінологія та позначення}}\label{section-1}

Ми користуватимемося термінологією з \cite{Carruth-Hildebrant-Koch-1983, Carruth-Hildebrant-Koch-1986, Clifford-Preston-1961, Clifford-Preston-1967, Engelking-1989, Lawson-1998, Petrich1984, Ruppert-1984}. Якщо $f\colon X\to Y$ -- відображення, то для довільної точки $y\in Y$ через $f^{-1}(y)$ будемо позначати повний прообраз точки $y$ стосовно відображення $f$.

Напівгрупа -- це непорожня множина з визначеною на ній бінарною асоціативною операцією.

Якщо $S$ --- напівгрупа, то через $S^1$ позначатимемо $S$ з приєднаною одиницею та  відношення Ґріна $\mathscr{R}$, $\mathscr{L}$, $\mathscr{J}$, $\mathscr{D}$ і $\mathscr{H}$ на $S$ визначаються так:
\begin{align*}
    &\qquad a\mathscr{R}b \mbox{~тоді і лише тоді, коли~} aS^1=bS^1;\\
    &\qquad a\mathscr{L}b \mbox{~тоді і лише тоді, коли~} S^1a=S^1b;\\
    &\qquad a\mathscr{J}b \mbox{~тоді і лише тоді, коли~} S^1aS^1=S^1bS^1;%\\
\end{align*}
\begin{align*}
    &\qquad \mathscr{D}=\mathscr{L}\circ\mathscr{R}= \mathscr{R}\circ\mathscr{L};\\
    &\qquad \mathscr{H}=\mathscr{L}\cap\mathscr{R}
\end{align*}
(див. \cite[\S2.1]{Clifford-Preston-1961} або \cite{Green-1951}). Також, через $H_S(1_S)$ будемо позначати \emph{групу одиниць} моноїда $S$, і в цьому випадку, очевидно, що $H_S(1_S)$ є $\mathscr{H}$-класом одиниці $1_S$ моноїда $S$.

Нехай $S$ --- напівгрупа. Через $E(S)$ позначатимемо \emph{множину ідемпотентів} в $S$. Напівгрупова операція визначає частковий порядок $\leq$ на $E(S)$
\begin{equation*}
e\leq f \quad \Leftrightarrow \quad ef=fe=e.
\end{equation*}
Цей порядок називається \emph{природним частковим порядком} на $E(S)$. Напівґратка --- це комутативна напівгрупа ідемпотентів. Через $Z(S)$ будемо позначати центр напівгрупи $S$, тобто $Z(S)=\left\{s\in S\colon sx=xs \hbox{~для всіх~} x\in S\right\}$.

Якщо $\mathfrak{C}$~--- конгруенція на напівгрупі $S$, то через $[s]_{\mathfrak{C}}$ позначатимемо клас еквівалентності $\mathfrak{C}$, який містить елемент $s\in S$.

Напівгрупа $S$ називається:
\begin{itemize}
  \item \emph{простою}, якщо $S$ не має власних двобічних ідеалів;
  \item \emph{$0$-простою}, якщо $S$ містить нуль і $S$ не має власних двобічних ідеалів відмінних від $\{0\}$;
  \item \emph{біпростою}, якщо $S$ містить єдиний $\mathscr{D}$-клас;
  \item \emph{$0$-біпростою}, якщо $S$ має нуль і $S$ містить два $\mathscr{D}$-класи: $\{0\}$ і $S\setminus\{0\}$;
  \item \emph{комбінаторною}, якщо усі $\mathscr{H}$-класи в $S$ є одноелементними;
  \item \emph{конгруенц-простою}, якщо $S$ має лише одиничну й універсальну конгруенції.
\end{itemize}

Нагадаємо, що на інверсній напівгрупі $S$ напівгрупова операція визначає част\-ковий порядок $\leq$ на $S$
\begin{equation*}
x\leq y\qquad \Longleftrightarrow \qquad \hbox{існує} \quad e\in E(S) \quad \hbox{такий, що} \quad x=ey.
\end{equation*}
Цей порядок називається \emph{природним частковим порядком} на $S$.

Інверсна напівгрупа $S$ називається:
\begin{itemize}
  \item Е-\emph{унітарною}, якщо з  $es\in E(S)$ випливає, що $s\in E(S)$ для довільних $e\in E(S)$ і $s\in S$ \cite{Saito-1965};
  \item $0$-Е-\emph{унітарною}, якщо $S$ містить нуль $0_S$ i з  $es\in E(S)$ випливає, що $s\in E(S)$ для довільних $e\in E(S)\setminus\{0_S\}$ і $s\in S$ \cite{Lawson-1999, Szendrei-1987}.
\end{itemize}

\emph{Напівтопологічною} (\emph{топологічною}) напівгрупою називається топологічний простір з нарізно неперервною (неперервною) напівгруповою операцією. Ін\-верс\-на топологічна напівгрупа з неперервною інверсією називається \emph{топологічною ін\-верс\-ною напівгрупою}.

Топологія $\tau$ на напівгрупі $S$ називається:
\begin{itemize}
  \item \emph{трансляційно неперервною}, якщо $(S,\tau)$ --- напівтопологічна напівгрупа;
  \item \emph{напівгруповою}, якщо $(S,\tau)$ --- топологічна напівгрупа.
\end{itemize}

Біциклічний моноїд $\mathscr{C}(p,q)$  -- це напівгрупа з одиницею $1$ породжена двома елементами $p$ і $q$, що задовольняє умову $pq=1$. На $\mathscr{C}(p,q)$ напівгрупова операція визначається так:
\begin{equation*}
q^{k}p^{l}\cdot q^{m}p^{n}=q^{k+m-\min\{l,m\}}p^{l+n-\min\{l,m\}}.
\end{equation*}
Біциклічний моноїд $\mathscr{C}(p,q)$ є комбінаторною, біпростою, $F$-інверсною напівгрупою \cite{Lawson-1998} і відіграє важливу роль в алгебричній теорії напівгруп і в теорії топологічних напівгруп. Зокрема, добре відомий результат Андерсона \cite{Andersen-1952} стверджує, що (0-)проста напівгрупа є цілком (0-)простою тоді і тільки тоді, коли вона не містить ізоморфної копії біциклічного моноїда.

У 1970 році Ніва та Перро запропонували таке узагальнення біциклічного моноїда (\cite{Lawson-1998}, \cite{Nivat-Perrot-1970}). Для будь-якого ненульового кардинала $\lambda$  \emph{поліциклічний моноїд} $P_{\lambda}$ --- це напівгрупа з нулем така, що
\begin{equation*}
P_{\lambda}=\left\langle\{p_{i}\}_{i\in\lambda},\{p_{i}^{-1}\}_{i\in\lambda}\mid p_{i}p_{i}^{-1}=1 \hbox{ і }  p_{i}p_{j}^{-1}=0 \hbox{ для } i\neq j\right\rangle.
\end{equation*}
Очевидно, що у випадку $\lambda=1$  напівгрупа $P_{1}$ ізоморфна біциклічному моноїду з приєднаним нулем.

Рональд Брук у монографії \cite{Bruck-1958}, використовуючи біциклічний моноїд, побудував конструкцію алгебричного занурення довільної напівгрупи $S$ у простий моноїд $\boldsymbol{B}(S)$ (див. \cite[\S8.3]{Clifford-Preston-1967}). Рейлі \cite{Reilly=1966} та Уорн \cite{Warne=1966} узагальнили конструкцію Брука, побудувавши розширення Брука--Рейлі $\boldsymbol{BR}(S,\theta)$, для описання структури біпростих регулярних $\omega$-напівгруп (див. \cite[розділ II]{Petrich1984}). Топологізації напівгруп Бука та Брука--Рейлі вивчалися в працях  \cite{Gutik-1994, Gutik-1997, Gutik-2018, Gutik-Pavlyk=2009}. Структуру топологічних інверсних локально компактних біпростих $\omega$-напівгруп вивчали в працях \cite{Selden=1975, Selden=1976, Selden=1977}. Ми будуємо та досліджуємо аналог розширення Брука--Рейлі напівгруп для $\lambda$-поліциклічного моноїда та досліджуємо його властивості.

Нехай $\lambda$ -- довільний ненульовий кардинал. Надалі, через $\lambda^{*}$ будемо позначати вільний моноїд над алфавітом $\lambda$, а через $\varepsilon$ -- порожнє слово в $\lambda^{*}$. Для будь-якого слова $a\in \lambda^{*}$  позначимо:
\begin{itemize}
  \item[] $|a|$  -- довжину слова $a$;
  \item[] $\operatorname{suff}(a)=\{b\in\lambda^{*}\colon  \hbox{існує слово }c\in \lambda^{*} \hbox{ таке, що } cb=a\}$  -- множину всіх су\-фік\-сів слова $a$;
  \item[] $\operatorname{suff^{o}}(a)=\{b\in\lambda^{*}\colon  \hbox{існує слово } c\in \lambda^{*}\setminus\{\varepsilon\} \hbox{ таке, що } cb=a\}$  -- множину всіх власних суфіксів слова $a$.
\end{itemize}

Нехай $S$ -- моноїд і $\theta\colon S\rightarrow H_{S}(1)$ -- гомоморфізм з $S$ у його групу одиниць $H_{S}(1)$. Множина $\mathscr{P}_{\lambda}(\theta,S)=(S\times (P_{\lambda}\setminus\{0\}))\sqcup\{\boldsymbol{0}\}$ з бінарною операцією
\begin{equation}\label{eq-1}
(s,a_{1}^{-1}a_{2})*(t,b_{1}^{-1}b_{2})=
 \begin{cases}
   \left(\theta^{|u|}(s)t,(ua_{1})^{-1}b_{2}\right), &\text{якщо існує слово~} u\in\lambda^{*}\\
                                                     &\qquad\text{таке, що~} b_{1}=ua_{2};\\
   \left(s\theta^{|v|}(t),a_{1}^{-1}vb_{2}\right), &\text{якщо існує слово~}  v\in\lambda^{*}\\
                                                     &\qquad\text{таке, що~} a_{2}=vb_{1};\\
   \boldsymbol{0}, &\text{в іншому випадку,}
 \end{cases}
\end{equation}
i
\begin{equation*}
(s,a_{1}^{-1}a_{2})*\mathbf{0}=\boldsymbol{0}*(s,a_{1}^{-1}a_{2})=\boldsymbol{0}*\boldsymbol{0}=\boldsymbol{0},
\end{equation*}
де $\theta^{n}(s)=\underbrace{\theta\circ\cdots\circ\theta}_{n}(s)$ для будь-якого натурального числа $n$ і $\theta^{0}(s)=s$ називається \emph{$\lambda$-поліциклічним розширенням Брука-Рейлі} моноїда $S$ з визначеним гомоморфізмом $\theta$.

Ми доводимо, що так визначена бінарна операція $*$ на $\mathscr{P}_{\lambda}(\theta,S)$ є асоціативною, а також описуємо ідемпотенти напівгрупи $\left(\mathscr{P}_{\lambda}(\theta,S),*\right)$ та відношення Ґріна на $\left(\mathscr{P}_{\lambda}(\theta,S),*\right)$. Доведено, що $\left(\mathscr{P}_{\lambda}(\theta,S),*\right)$ --- $0$-проста напівгрупа для довільної напівгрупи $S$. Знайдено необхідні та достатні умови на моноїд $S$ гомоморфізм $\theta$, за виконання яких напівгрупа $\left(\mathscr{P}_{\lambda}(\theta,S),*\right)$ є регулярною, інверсною, $0$-біпростою, комбінаторною, конгруенц-простою, чи інверсною 0-E-унітарною. Також вивчається топологізація напівгрупи $\left(\mathscr{P}_{\lambda}(\theta,S),*\right)$. Отримані результати анонсовано в \cite{PKhylynskyi-Gutik=2019}.
%%%%%%%%%%%%%%%%%%%%%%%%%%%%%%%%%%%%%%%%%%%%%%%%%%%%%%%%%%%%

\section{\textbf{Алгебричні властивості напівгрупи $\mathscr{P}_{\lambda}(\theta,S)$}}

\begin{proposition}\label{proposition-1}
$\left(\mathscr{P}_{\lambda}(\theta,S),*\right)$ є напівгрупою.
\end{proposition}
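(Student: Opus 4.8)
The plan is to check directly that $*$ is a well-defined associative binary operation on $\mathscr{P}_{\lambda}(\theta,S)$. For well-definedness, every word $d_{1}^{-1}d_{2}$ with $d_{1},d_{2}\in\lambda^{*}$ is a nonzero element of $P_{\lambda}$, so the right-hand sides of~\eqref{eq-1} always lie in $\mathscr{P}_{\lambda}(\theta,S)$; the only genuine point is that the first two clauses of~\eqref{eq-1} agree where both apply. But both apply exactly when $b_{1}=ua_{2}$ and $a_{2}=vb_{1}$ for some $u,v\in\lambda^{*}$, which forces $uv=\varepsilon$, hence $u=v=\varepsilon$ and $a_{2}=b_{1}$; since $\theta^{0}(s)=s$, both clauses then return $(st,a_{1}^{-1}b_{2})$. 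Associativity involving $\boldsymbol{0}$ is immediate because $\boldsymbol{0}$ is absorbing, so it remains to prove $(x*y)*z=x*(y*z)$ for $x=(s,a_{1}^{-1}a_{2})$, $y=(t,b_{1}^{-1}b_{2})$, $z=(r,c_{1}^{-1}c_{2})$ with $s,t,r\in S$ and $a_{i},b_{i},c_{i}\in\lambda^{*}$.

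First I would record the elementary facts used repeatedly: $|uv|=|u|+|v|$ and $\theta^{m}\circ\theta^{n}=\theta^{m+n}$; the homomorphism identity $\theta^{n}(pq)=\theta^{n}(p)\,\theta^{n}(q)$; the observation that $(p,d_{1}^{-1}d_{2})*(q,e_{1}^{-1}e_{2})\neq\boldsymbol{0}$ if and only if one of $d_{2},e_{1}$ is a suffix of the other, i.e.\ $d_{2}\in\operatorname{suff}(e_{1})$ or $e_{1}\in\operatorname{suff}(d_{2})$; and the fact that the connecting word --- $u$ with $e_{1}=ud_{2}$, or $v$ with $d_{2}=ve_{1}$ --- is then uniquely determined by right cancellation in $\lambda^{*}$, so the choice in~\eqref{eq-1} is unambiguous.

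The core of the proof is a case analysis governed by these suffix comparabilities. I would split first according to which of $a_{2},b_{1}$ is a suffix of the other (the two options overlapping harmlessly when $a_{2}=b_{1}$), which fixes the shape of $x*y$: either $(\theta^{|u|}(s)t,(ua_{1})^{-1}b_{2})$ with $b_{1}=ua_{2}$, or $(s\theta^{|v|}(t),a_{1}^{-1}vb_{2})$ with $a_{2}=vb_{1}$. Then I would split again according to how the right-hand word of $x*y$ (namely $b_{2}$, resp.\ $vb_{2}$) compares with $c_{1}$, and compute $(x*y)*z$ in each branch. Symmetrically I would compute $y*z$ --- either $(\theta^{|w|}(t)r,(wb_{1})^{-1}c_{2})$ with $c_{1}=wb_{2}$, or $(t\theta^{|w'|}(r),b_{1}^{-1}w'c_{2})$ with $b_{2}=w'c_{1}$ --- and then $x*(y*z)$, splitting on the comparison of $a_{2}$ with the left-hand word of $y*z$. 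In each branch the second ($P_{\lambda}$) component matches by a short word identity (e.g.\ $b_{1}=ua_{2}$ together with $c_{1}=wb_{2}$ gives $wb_{1}=(wu)a_{2}$), and the first ($S$) component matches after applying $\theta^{m}\circ\theta^{n}=\theta^{m+n}$ and the homomorphism identity together with length bookkeeping. A few branches require subdividing a suffix comparison by word length: the typical instance is that the condition for $c_{1}$ to be a suffix of $vb_{2}$ splits, according to whether $|c_{1}|\le|b_{2}|$ or not, into the case that $c_{1}$ is a suffix of $b_{2}$ and the case that $b_{2}$ is a suffix of $c_{1}$ with $c_{1}=qb_{2}$ and $v=z'q$; in the latter, $|z'|+|q|=|v|$ reconciles the $\theta$-exponents.

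Finally I would verify that the vanishing branches match, i.e.\ $(x*y)*z=\boldsymbol{0}$ precisely when $x*(y*z)=\boldsymbol{0}$; this amounts to a few small lemmas on suffixes, such as: if $a_{2}$ and $b_{1}$ are incomparable and $c_{1}=wb_{2}$, then $a_{2}$ and $wb_{1}$ are also incomparable; and if $c_{1}$ is a suffix of $b_{2}$, then $c_{1}$ is automatically a suffix of $vb_{2}$. The argument is conceptually routine --- for $\lambda=1$ the symbol $\boldsymbol{0}$ never arises and the computation is exactly the classical associativity check for the Bruck--Reilly extension, and the $P_{\lambda}$-component bookkeeping mimics associativity in the polycyclic monoid $P_{\lambda}$ --- so the only genuine obstacle is organising the nested case split and the $\theta$-exponent arithmetic so that no branch is overlooked.
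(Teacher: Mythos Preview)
Your proposal is correct and follows essentially the same approach as the paper: a direct case-by-case verification of associativity governed by the suffix comparabilities of $a_{2}$ with $b_{1}$ and of $b_{2}$ with $c_{1}$, using the identities $\theta^{m}\circ\theta^{n}=\theta^{m+n}$ and $\theta^{n}(pq)=\theta^{n}(p)\theta^{n}(q)$ to match the $S$-components. The paper organises the nine cases by fixing both suffix relations simultaneously (with a further three-way split in the one case where the induced comparison of $vb_{2}$ with $c_{1}$ is not immediate), whereas you first fix the shape of $x*y$ and then compare its right word with $c_{1}$; these are equivalent bookkeeping choices, and your explicit remark on well-definedness (the overlap of the first two clauses forcing $u=v=\varepsilon$) is a welcome observation that the paper leaves implicit.
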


\begin{proof}
Нехай $(s,a_{1}^{-1}a_{2})$, $(t,b_{1}^{-1}b_{2})$ і $(r,c_{1}^{-1}c_{2})$ -- довільні ненульові елементи множини $\mathscr{P}_{\lambda}(\theta,S)$. Розглянемо можливі випадки.

1. Існують слова $u,v\in\lambda^{*}$ такі, що $b_{1}=ua_{2}$ і $c_{1}=vb_{2}$. Тоді:
\begin{align*}
((s,a_{1}^{-1}a_{2})*(t,b_{1}^{-1}b_{2}))*(r,c_{1}^{-1}c_{2})&=
(\theta^{|u|}(s)t,(ua_{1})^{-1}b_{2})*(r,c_{1}^{-1}c_{2})=\\
&=(\theta^{|v|}(\theta^{|u|}(s)t)r,(vua_{1})^{-1}c_{2})=\\
&=(\theta^{|v|+|u|}(s)\theta^{|v|}(t)r,(vua_{1})^{-1}c_{2})=\\
&=(\theta^{|vu|}(s)\theta^{|v|}(t)r,(vua_{1})^{-1}c_{2})=\\
&=(s,a_{1}^{-1}a_{2})*(\theta^{|v|}(t)r,(vb_{1})^{-1}c_{2})=\\
&=(s,a_{1}^{-1}a_{2})*((t,b_{1}^{-1}b_{2})*(r,c_{1}^{-1}c_{2})).
\end{align*}

2. Існують слова $u,v\in\lambda^{*}$ такі, що $a_{2}=ub_{1}$ і $c_{1}=vb_{2}$. Тоді розглянемо можливі підвипадки.

a) Існує слово $w\in\lambda^{*}$ таке, що $u=wv$. Тоді:
\begin{align*}
((s,a_{1}^{-1}a_{2})*(t,b_{1}^{-1}b_{2}))*(r,c_{1}^{-1}c_{2})&=
(s\theta^{|u|}(t),a_{1}^{-1}ub_{2})*(r,c_{1}^{-1}c_{2})=\\
&=(s\theta^{|wv|}(t),a_{1}^{-1}wvb_{2})*(r,c_{1}^{-1}c_{2})=\\
&=(s\theta^{|wv|}(t),a_{1}^{-1}wc_{1})*(r,c_{1}^{-1}c_{2})=\\
&=(s\theta^{|wv|}(t)\theta^{|w|}(r),a_{1}^{-1}wc_{2})=\\
&=(s\theta^{|w|+|v|}(t)\theta^{|w|}(r),a_{1}^{-1}wc_{2}))=\\
&=(s\theta^{|w|}(\theta^{|v|}(t)r),a_{1}^{-1}wc_{2}))=\\
&=(s,a_{1}^{-1}a_{2})*(\theta^{|v|}(t)r,(vb_{1})^{-1}c_{2}))=\\
&=(s,a_{1}^{-1}a_{2})*((t,b_{1}^{-1}b_{2})*(r,c_{1}^{-1}c_{2})).
\end{align*}

b) Існує слово $w\in\lambda^{*}$ таке, що $v=wu$. Тоді:
\begin{align*}
((s,a_{1}^{-1}a_{2})*(s,b_{1}^{-1}b_{2}))*(r,c_{1}^{-1}c_{2})&=
(s\theta^{|u|}(t),a_{1}^{-1}ub_{2})*(r,c_{1}^{-1}c_{2})=\\
&=(\theta^{|w|}(s\theta^{|u|}(t))r,(wa_{1})^{-1}c_{2})=\\
&=(\theta^{|w|}(s)\theta^{|w|+|u|}(t)r,(wa_{1})^{-1}c_{2})=\\
%\end{align*}
%\begin{align*}
&=(\theta^{|w|}(s)\theta^{|wv|}(t)r,(wa_{1})^{-1}c_{2})=\\
&=(\theta^{|w|}(s)\theta^{|v|}(t)r,(wa_{1})^{-1}c_{2})=\\
&=(s,a_{1}^{-1}a_{2})*(\theta^{|v|}(t)r,(wa_{2})^{-1}c_{2})=\\
&=(s,a_{1}^{-1}a_{2})*(\theta^{|v|}(t)r,(wub_{1})^{-1}c_{2})=\\
&=(s,a_{1}^{-1}a_{2})*(\theta^{|v|}(t)r,(vb_{1})^{-1}c_{2})=\\
&=(s,a_{1}^{-1}a_{2})*((t,b_{1}^{-1}b_{2})*(r,c_{1}^{-1}c_{2})).
\end{align*}

c) У випадку $u\notin\operatorname{suff}(v)$ і $v\notin\operatorname{suff}(u)$ отримуємо, що
\begin{align*}
((s,a_{1}^{-1}a_{2})*(t,b_{1}^{-1}b_{2}))*(r,c_{1}^{-1}c_{2})&=
(s\theta^{|u|}(t),a_{1}^{-1}ub_{2})*(r,c_{1}^{-1}c_{2})=\\
&=\boldsymbol{0}=\\
&=(s,a_{1}^{-1}a_{2})*(\theta^{|v|}(t)r,(vb_{1})^{-1}c_{2})=\\
&=(s,a_{1}^{-1}a_{2})*((t,b_{1}^{-1}b_{2})*(r,c_{1}^{-1}c_{2})).
\end{align*}

3. Існують слова $u,v\in\lambda^{*}$ такі, що $b_{1}=ua_{2}$ і $b_{2}=vc_{1}$. Тоді:
\begin{align*}
((s,a_{1}^{-1}a_{2})*(t,b_{1}^{-1}b_{2}))*(r,c_{1}^{-1}c_{2})&=
(\theta^{|u|}(s)t,(ua_{1})^{-1}b_{2})*(r,c_{1}^{-1}c_{2})=\\
&=(\theta^{|u|}(s)t\theta^{|v|}(r),(ua_{1})^{-1}vc_{2})=\\
&=(s,a_{1}^{-1}a_{2})*(t\theta^{|v|}(r),b_{1}^{-1}vc_{2})=\\
&=(s,a_{1}^{-1}a_{2})*((t,b_{1}^{-1}b_{2})*(r,c_{1}^{-1}c_{2})).
\end{align*}

4. Існують слова $u,v\in\lambda^{*}$ такі, що $a_{2}=ub_{1}$ і $b_{2}=vc_{1}$. Тоді:
\begin{align*}
((s,a_{1}^{-1}a_{2})*(t,b_{1}^{-1}b_{2}))*(r,c_{1}^{-1}c_{2})&=
(s\theta^{|u|}(t),a_{1}^{-1}ub_{2})*(r,c_{1}^{-1}c_{2})=\\
&=(s\theta^{|u|}(t)\theta^{|uv|}(r),a_{1}^{-1}uvc_{2})=\\
&=(s\theta^{|u|}(t)\theta^{|u|+|v|}(r),a_{1}^{-1}uvc_{2})=\\
&=(s\theta^{|u|}(t\theta^{|v|}(r)),a_{1}^{-1}uvc_{2})=\\
&=(s,a_{1}^{-1}a_{2})*(t\theta^{|v|}(r),b_{1}^{-1}vc_{2})=\\
&=(s,a_{1}^{-1}a_{2})*((t,b_{1}^{-1}b_{2})*(r,c_{1}^{-1}c_{2})).
\end{align*}

5. Існує слово $u\in\lambda^{*}$ таке, що $b_{1}=ua_{2}$, $b_{2}\notin\operatorname{suff}(c_{1})$ і $c_{1}\notin\operatorname{suff}(b_{2})$. Тоді:
\begin{align*}
((s,a_{1}^{-1}a_{2})*(t,b_{1}^{-1}b_{2}))*(r,c_{1}^{-1}c_{2})&=
(\theta^{|u|}(s)t,(ua_{1})^{-1}b_{2})*(r,c_{1}^{-1}c_{2})=\\
&=\boldsymbol{0}=\\
&=(s,a_{1}^{-1}a_{2})*((t,b_{1}^{-1}b_{2})*(r,c_{1}^{-1}c_{2})).
\end{align*}

6. Існує слово $u\in\lambda^{*}$ таке, що $c_{1}=ub_{2}$, $a_{2}\notin\operatorname{suff}(b_{1})$ і $b_{1}\notin\operatorname{suff}(a_{2})$. Тоді:
\begin{align*}
((s,a_{1}^{-1}a_{2})*(t,b_{1}^{-1}b_{2}))*(r,c_{1}^{-1}c_{2})&=\boldsymbol{0}=\\
&=(s,a_{1}^{-1}a_{2})*(\theta^{|u|}(t)r,(ub_{1})^{-1}c_{2})=\\
&=(s,a_{1}^{-1}a_{2})*((t,b_{1}^{-1}b_{2})*(r,c_{1}^{-1}c_{2})).
\end{align*}

7. Існує слово $u\in\lambda^{*}$ таке, що $a_{2}=ub_{1}$, $b_{2}\notin\operatorname{suff}(c_{1})$ і $c_{1}\notin\operatorname{suff}(b_{2})$.  Тоді:
\begin{align*}
((s,a_{1}^{-1}a_{2})*(t,b_{1}^{-1}b_{2}))*(r,c_{1}^{-1}c_{2})&=
(s\theta^{|u|}(t),a_{1}^{-1}ub_{2})*(r,c_{1}^{-1}c_{2})=\\
&=\boldsymbol{0}=\\
&=(s,a_{1}^{-1}a_{2})*((t,b_{1}^{-1}b_{2})*(r,c_{1}^{-1}c_{2})).
\end{align*}

8. Існує слово $u\in\lambda^{*}$ таке, що $b_{2}=uc_{1}$ і $a_{2}\notin\operatorname{suff}(b_{1})$ і $b_{1}\notin\operatorname{suff}(a_{2})$. Тоді
\begin{align*}
((s,a_{1}^{-1}a_{2})*(t,b_{1}^{-1}b_{2}))*(r,c_{1}^{-1}c_{2})&=\boldsymbol{0}=\\
&=(s,a_{1}^{-1}a_{2})*(t\theta^{|u|}(r),b_{1}^{-1}uc_{2})=\\
&=(s,a_{1}^{-1}a_{2})*((t,b_{1}^{-1}b_{2})*(r,c_{1}^{-1}c_{2})).
\end{align*}

9. У випадку $a_{2}\notin\operatorname{suff}(b_{1})$, $b_{1}\notin\operatorname{suff}(a_{2})$, $b_{2}\notin\operatorname{suff}(c_{1})$ і $c_{1}\notin\operatorname{suff}(b_{2})$ маємо, що
\begin{equation*}
((s,a_{1}^{-1}a_{2})*(t,b_{1}^{-1}b_{2}))*(r,c_{1}^{-1}c_{2})=\boldsymbol{0}=
(s,a_{1}^{-1}a_{2})*((t,b_{1}^{-1}b_{2})*(r,c_{1}^{-1}c_{2})).
\end{equation*}

Отож, бінарна операція $*$ на $\mathscr{P}_{\lambda}(\theta,S)$  асоціативна.
\end{proof}

Надалі, якщо не зазначено інше, то будемо вважати, що $S$~--- моноїд з одиницею $1_S$ і групою одиниць $H_S(1_S)$. Також через $1_{P_{\lambda}}$ i $0_{P_{\lambda}}$  позначатимемо одиницю та нуль поліциклічного моноїда $P_{\lambda}$, а через $\boldsymbol{0}$~--- нуль напівгрупи $\mathscr{P}_{\lambda}(\theta,S)$.

\begin{proposition}\label{proposition-2}
Ненульовий елемент $(s,a_{1}^{-1}a_{2})$ напівгрупи $\mathscr{P}_{\lambda}(\theta,S)$ є ідемпотентом тоді і тільки тоді, коли  $s\in E(S)$ і $a_{1}=a_{2}$.
\end{proposition}

\begin{proof}
$(\Rightarrow)$
Нехай  $(e,a_{1}^{-1}a_{2})$ -- ненульовий ідемпотент напівгрупи $\mathscr{P}_{\lambda}(\theta,S)$. Розглянемо мож\-ли\-ві випадки.

1. Якщо існує слово $u\in\lambda^{*}$ таке, що $a_{1}=ua_{2}$, то
\begin{align*}
(e,a_{1}^{-1}a_{2})=(e,a_{1}^{-1}a_{2})*(e,a_{1}^{-1}a_{2})=(\theta^{|u|}(e)\cdot e,(ua_{1})^{-1}a_{2}).
\end{align*}
З цієї рівності випливає, що $a_{1}=ua_{1}$. Тому $u=\varepsilon$, а отже,
\begin{equation*}
e=\theta^{|u|}(e)\cdot e=\theta^{0}(e)\cdot e=e\cdot e
\end{equation*}
і $a_{1}=a_{2}$.

2. Якщо існує слово $u\in\lambda^{*}$ таке, що $a_{2}=ua_{1}$, то
\begin{align*}
(e,a_{1}^{-1}a_{2})=(e,a_{1}^{-1}a_{2})*(e,a_{1}^{-1}a_{2})=(e\cdot \theta^{|u|}(e) ,a_{1}^{-1}ua_{2}).
\end{align*}
З цієї рівності випливає, що $a_{2}=ua_{2}$. Тому $u=\varepsilon$, а отже,
\begin{equation*}
e=e\cdot \theta^{|u|}(e)=e\cdot\theta^{0}(e)=e\cdot e
\end{equation*}
і $a_{1}=a_{2}$.

$(\Leftarrow)$
Нехай $e$ -- довільний ідемпотент напівгрупи $S$ і $a$ -- довільне слово з $\lambda^{*}$. Тоді $(e,a^{-1}a)*(e,a^{-1}a)=(e^{2},a^{-1}a)=(e,a^{-1}a)$.
\end{proof}

\begin{proposition}\label{proposition-3}
Ідемпотенти комутують у $\mathscr{P}_{\lambda}(\theta,S)$ тоді і тільки тоді, коли ідемпотенти комутують у напівгрупі $S$.
\end{proposition}

\begin{proof}
$(\Rightarrow)$
Нехай $e,f$ -- довільні ідемпотенти напівгрупи $S$. Тоді елементи $(e,1_{P_{\lambda}})$ і  $(f,1_{P_{\lambda}})$ є ідемпотентами напівгрупи $\mathscr{P}_{\lambda}(\theta,S)$. Оскільки індемпотенти в $\mathscr{P}_{\lambda}(\theta,S)$ комутують, то
\begin{equation*}
(e\cdot f,1_{P_{\lambda}})=(e,1_{P_{\lambda}})*(f,1_{P_{\lambda}})=(f,1_{P_{\lambda}})*(e,1_{P_{\lambda}})=(f\cdot e,1_{P_{\lambda}}),
\end{equation*}
а отже, $e\cdot f=f\cdot e$.

$(\Leftarrow)$
Нехай $(e,a^{-1}a)$ і $(f,b^{-1}b)$ -- довільні ненульові ідемпотенти напівгрупи $\mathscr{P}_{\lambda}(\theta,S)$. Розглянемо можливі випадки.

1. Існує слово $u\in\lambda^{*}$ таке, що $a=ub$. Тоді
\begin{align*}
(e,a^{-1}a)*(f,b^{-1}b)&=(e\cdot \theta^{|u|}(f),a^{-1}ub)=\\
&=(e\cdot 1_{S},a^{-1}ub)=\\
&=(1_{S}\cdot e ,a^{-1}ub)=\\
&=(\theta^{|u|}(f)\cdot e,(ub)^{-1}a)=\\
&=(f,b^{-1}b)*(e,a^{-1}a).
\end{align*}

2. Існує слово $u\in\lambda^{*}$ таке, що $b=ua$. Тоді
\begin{align*}
(e,a^{-1}a)*(f,b^{-1}b)&=(\theta^{|u|}(e)\cdot f,(ua)^{-1}b)=\\
&=(1_{S}\cdot f ,a^{-1}ub)=\\
&=(f\cdot 1_{S},a^{-1}ub)=\\
&=(f\cdot \theta^{|u|}(e),b^{-1}ua)=\\
&=(f,b^{-1}b)*(e,a^{-1}a).
\end{align*}

3. Якщо $a\notin\operatorname{suff}(b)$ і $b\notin\operatorname{suff}(a)$, то
\begin{equation*}
(e,a^{-1}a)*(f,b^{-1}b)=\boldsymbol{0}=(f,b^{-1}b)*(e,a^{-1}a).
\end{equation*}

Очевидно, що кожен елемент напівгрупи $\mathscr{P}_{\lambda}(\theta,S)$ комутує з її нулем.
\end{proof}

Нехай $S$ -- напівгрупа. Для довільних $a_{1}^{-1}a_{2}\in P_{\lambda}$, $A\subseteq S$ і $s\in S$ позначимо
\begin{align*}
S_{a_{1}^{-1}a_{2}}&=\left\{(t,a_{1}^{-1}a_{2})\in\mathscr{P}_{\lambda}(\theta,S)\colon t\in S\right\},\\
A_{a_{1}^{-1}a_{2}}&=\left\{(t,a_{1}^{-1}a_{2})\in\mathscr{P}_{\lambda}(\theta,S)\colon t\in A\subseteq S\right\},\\
P_{\lambda}^{s}&=\left\{(s,b_{1}^{-1}b_{2})\in\mathscr{P}_{\lambda}(\theta,S)\colon b_{1}^{-1}b_{2}\in P_{\lambda}\setminus\left\{0_{P_{\lambda}}\right\}\right\}\cup\{\boldsymbol{0}\}.
\end{align*}

\begin{proposition}\label{proposition-4a}
\begin{itemize}
  \item[1.] Множина $S_{a_{1}^{-1}a_{2}}$ з індукованою з $\mathscr{P}_{\lambda}(\theta,S)$ операцією ізоморфна напівгрупі $S$ тоді і тільки тоді, коли $a_{1}=a_{2}$.
  \item[2.]  Множина $P_{\lambda}^{s}$ з індукованою з $\mathscr{P}_{\lambda}(\theta,S)$ операцією ізоморфна поліциклічному моноїду $P_{\lambda}$ тоді і тільки тоді, коли $s$ -- ідемпотент напівгрупи $S$.
\end{itemize}
\end{proposition}

\begin{proof}
1. Якщо $a_{1}\neq a_{2}$, то з тверження \ref{proposition-2} випливає, що $S_{a_{1}^{-1}a_{2}}$ не є піднапівгрупою напівгрупи $\mathscr{P}_{\lambda}(\theta,S)$.

У випадку  $a_{1}=a_{2}$ визначимо відображення $f\colon S_{a_{1}^{-1}a_{2}}\rightarrow S$ за формулою $f((s,a_{1}^{-1}a_{2}))=s$. Очевидно, що відображення $f$ є бієктивним. Доведемо, що воно зберігає операцію. Нехай $(s,a_{1}^{-1}a_{2})$ і $(t,a_{1}^{-1}a_{2})$ -- довільні елементи з $S_{a_{1}^{-1}a_{2}}$. Тоді
\begin{equation*}
f((s,a_{1}^{-1}a_{2})*(t,a_{1}^{-1}a_{2}))=f((st,a_{1}^{-1}a_{2}))=st=f((s,a_{1}^{-1}a_{2}))\cdot f((t,a_{1}^{-1}a_{2})).
\end{equation*}
Отже, $f$ є ізоморфізмом.

2. Якщо $s$ не є ідемпотентом напівгрупи $S$, то
\begin{equation*}
(s,1_{P_{\lambda}})*(s,1_{P_{\lambda}})=(ss,1_{P_{\lambda}})\notin P_{\lambda}^{s},
\end{equation*}
а отже, $P_{\lambda}^{s}$ не є піднапівгрупою напівгрупи $\mathscr{P}_{\lambda}(\theta,S)$.

У випадку, коли $s$ -- ідемпотент напівгрупи $S$, то визначимо відображення $f\colon P_{\lambda}^{s}\rightarrow P_{\lambda}$ за формулами $f((s,a_{1}^{-1}a_{2}))=a_{1}^{-1}a_{2}$ і $f(\boldsymbol{0})=0_{P_{\lambda}}$. Очевидно, що так означене відображення $f$ є бієктивним. Доведемо, що воно зберігає операцію. Нехай $(s,a_{1}^{-1}a_{2}),(s,b_{1}^{-1}b_{2})\in P_{\lambda}^{s}$. Розглянемо можливі випадки.

a) Існує слово $u\in\lambda^{*}$ таке, що $b_{1}=ua_{2}$. Тоді:
\begin{align*}
f((s,a_{1}^{-1}a_{2})*(s,b_{1}^{-1}b_{2}))&=
f((\theta^{|u|}(s)s,a_{1}^{-1}u^{-1}b_{2}))=\\
&=f((1_{S}s,a_{1}^{-1}u^{-1}b_{2}))=\\
&=f((s,a_{1}^{-1}u^{-1}b_{2}))=\\
&=a_{1}^{-1}u^{-1}b_{2}=\\
&=a_{1}^{-1}a_{2}\cdot b_{1}^{-1}b_{2}=\\
&=f((s,a_{1}^{-1}a_{2}))*f((s,b_{1}^{-1}b_{2})).
\end{align*}

b) Існує слово $u\in\lambda^{*}$ таке, що $a_{2}=ub_{1}$. Тоді:
\begin{align*}
f((s,a_{1}^{-1}a_{2})*(s,b_{1}^{-1}b_{2}))&=
f((s\theta^{|u|}(s),a_{1}^{-1}ub_{2}))=\\
&=f((s1_{S},a_{1}^{-1}ub_{2}))=\\
&=f((s,a_{1}^{-1}ub_{2}))=\\
&=a_{1}^{-1}ub_{2}=\\
&=a_{1}^{-1}a_{2}\cdot b_{1}^{-1}b_{2}=\\
&=f((s,a_{1}^{-1}a_{2}))*f((s,b_{1}^{-1}b_{2})).
\end{align*}

c) Якщо $(s,a_{1}^{-1}a_{2})*(s,b_{1}^{-1}b_{2})=\boldsymbol{0}$, то
\begin{align*}
f((s,a_{1}^{-1}a_{2})*(s,b_{1}^{-1}b_{2}))=\boldsymbol{0}=a_{1}^{-1}a_{2}\cdot b_{1}^{-1}b_{2}=f((s,a_{1}^{-1}a_{2}))*f((s,b_{1}^{-1}b_{2})).
\end{align*}
\end{proof}

\begin{proposition}\label{proposition-4}
Елемент $(t,b_{1}^{-1}b_{2})$ є інверсним до елемента $(s,a_{1}^{-1}a_{2})$ в напівгрупі $\mathscr{P}_{\lambda}(\theta,S)$ тоді і тільки тоді, коли $b_{1}=a_{2},b_{2}=a_{1}$ і $t$ --- інверсний елемент до $s$ в напівгрупі $S$.
\end{proposition}

\begin{proof}
$(\Rightarrow)$
Нехай елемент $(t,b_{1}^{-1}b_{2})$ є інверсним до елемента $(s,a_{1}^{-1}a_{2})$ в напівгрупі $\mathscr{P}_{\lambda}(\theta,S)$. Розглянемо можливі випадки.

1. Існують слова $u,v\in\lambda^{*}$ такі, що $b_{1}=ua_{2}$ і $a_{1}=vb_{2}$. Тоді
\begin{align*}
(s,a_{1}^{-1}a_{2})*(t,b_{1}^{-1}b_{2})*(s,a_{1}^{-1}a_{2})&=
(\theta^{|u|}(s)t,(ua_{1})^{-1}b_{2})*(s,a_{1}^{-1}a_{2})=\\
&=(\theta^{|v|}(\theta^{|u|}(s)t)s,(vua_{1})^{-1}a_{2})=\\
&=(s,a_{1}^{-1}a_{2}).
\end{align*}
З цієї рівності випливає, що $a_{1}=vua_{1}$. Тому $u=\varepsilon$ і $v=\varepsilon$, а отже, $b_{1}=a_{2}$ і $b_{2}=a_{1}$.

2.  Існують слова $u,v\in\lambda^{*}$ такі, що $a_{1}=ub_{2}$ і $a_{2}=vb_{1}$. Тоді
\begin{align*}
(t,b_{1}^{-1}b_{2})*(s,a_{1}^{-1}a_{2})*(t,b_{1}^{-1}b_{2})&=
(\theta^{|u|}(t)s,(ub_{1})^{-1}a_{2})*(t,b_{1}^{-1}b_{2})=\\
&=(\theta^{|u|}(t)s\theta^{|v|}(t),(ub_{1})^{-1}vb_{2})=\\
&=(t,b_{1}^{-1}b_{2}).
\end{align*}
З цієї рівності випливає, що $b_{1}=ub_{1}$ і $b_{2}=vb_{2}$. Тому $u=\varepsilon$ і $v=\varepsilon$, а отже, $b_{1}=a_{2}$ і $b_{2}=a_{1}$.

3. Існують слова $u,v\in\lambda^{*}$ такі, що $b_{1}=ua_{2}$ і $b_{2}=va_{1}$. Тоді
\begin{align*}
(s,a_{1}^{-1}a_{2})*(t,b_{1}^{-1}b_{2})*(s,a_{1}^{-1}a_{2})&=
(\theta^{|u|}(s)t,(ua_{1})^{-1}b_{2})*(s,a_{1}^{-1}a_{2})=\\
&=(\theta^{|u|}(s)t\theta^{|v|}(s),(ua_{1})^{-1}va_{2})=\\
&=(s,a_{1}^{-1}a_{2}).
\end{align*}
З цієї рівності випливає, що $a_{1}=ua_{1}$ і $a_{2}=va_{2}$. Тому $u=\varepsilon$ і $v=\varepsilon$, а отже, $b_{1}=a_{2}$ і $b_{2}=a_{1}$.

4. Існують слова $u,v\in\lambda^{*}$ такі, що $a_{2}=ub_{1}$ і $b_{2}=va_{1}$. Тоді
\begin{align*}
(s,a_{1}^{-1}a_{2})*(t,b_{1}^{-1}b_{2})*(s,a_{1}^{-1}a_{2})&=
(s\theta^{|u|}(t),a_{1}^{-1}ub_{2})*(s,a_{1}^{-1}a_{2})=\\
&=(s\theta^{|u|}(t)\theta^{|v|}(s),a_{1}^{-1}uva_{2})=\\
&=(s,a_{1}^{-1}a_{2}).
\end{align*}
З цієї рівності випливає, що $a_{2}=vua_{2}$. Тому $u=\varepsilon$ і $v=\varepsilon$, а отже, $b_{1}=a_{2}$ і $b_{2}=a_{1}$.

Отже, якщо елемент $(t,b_{1}^{-1}b_{2})$ є інверсним до елемента $(s,a_{1}^{-1}a_{2})$ в напівгрупі $\mathscr{P}_{\lambda}(\theta,S)$, то $b_{1}=a_{2}$ і $b_{2}=a_{1}$. Тому
\begin{align*}
(s,a_{1}^{-1}a_{2})*(t,b_{1}^{-1}b_{2})*(s,a_{1}^{-1}a_{2})&=
(s,a_{1}^{-1}a_{2})*(t,a_{2}^{-1}a_{1})*(s,a_{1}^{-1}a_{2})=\\
&=(sts,a_{1}^{-1}a_{2})=\\
&=(s,a_{1}^{-1}a_{2})
\end{align*}
і
\begin{align*}
(t,b_{1}^{-1}b_{2})*(s,a_{1}^{-1}a_{2})*(t,b_{1}^{-1}b_{2})&=
(t,a_{2}^{-1}a_{1})*(s,a_{1}^{-1}a_{2})*(t,a_{2}^{-1}a_{1})=\\
&=(tst,a_{2}^{-1}a_{1})=\\
&=(t,a_{2}^{-1}a_{1}).
\end{align*}
З першої рівності випливає, що $s=sts$, а з другої випливає, що $t=tst$. Отжн, елемент $t$ є інверсним до елемента $s$ у напівгрупі $S$.

$(\Leftarrow)$
Нехай $s$ -- довільний елемент напівгрупи $S$ і $s^{-1}$~--- інверсний елемент до $s$ у напівгрупі $S$. Тоді
\begin{align*}
(s,a_{1}^{-1}a_{2})*(s^{-1},a_{2}^{-1}a_{1})*(s,a_{1}^{-1}a_{2})=(ss^{-1}s,a_{1}^{-1}a_{2})=(s,a_{1}^{-1}a_{2})
\end{align*}
і
\begin{align*}
(s^{-1},a_{2}^{-1}a_{1})*(s,a_{1}^{-1}a_{2})*(s^{-1},a_{2}^{-1}a_{1})=(s^{-1}ss^{-1},a_{1}^{-1}a_{2})=(s^{-1},a_{2}^{-1}a_{1}).
\end{align*}
\end{proof}

З твердження \ref{proposition-4} випливають такі два наслідки.

\begin{corollary}\label{corollary-1}
Напівгрупа $\mathscr{P}_{\lambda}(\theta,S)$ -- регулярна тоді і тільки тоді, коли напівгрупа $S$ -- регулярна.
\end{corollary}

\begin{corollary}\label{corollary-2}
Напівгрупа $\mathscr{P}_{\lambda}(\theta,S)$ -- інверсна тоді і тільки тоді, коли напівгрупа $S$ -- інверсна.
\end{corollary}

\begin{lemma}\label{lemma-2}
Нехай $(s,a_{1}^{-1}a_{2})$ і $(t,b_{1}^{-1}b_{2})$ -- довільні ненульові елементи напівгрупи $\mathscr{P}_{\lambda}(\theta,S)$. Якщо $(s,a_{1}^{-1}a_{2})* (t,b_{1}^{-1}b_{2})=(r,c_{1}^{-1}c_{2})\neq \boldsymbol{0}$, то $a_{1}\in\operatorname{suff}(c_{1})$ і $b_{2}\in\operatorname{suff}(c_{2})$.
\end{lemma}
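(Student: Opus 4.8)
\emph{Proof proposal.} The plan is to argue directly from the defining formula \eqref{eq-1} by case analysis, using only the definition of $\operatorname{suff}$ recorded in Section~\ref{section-1}. Recall that $b\in\operatorname{suff}(a)$ means there is $c\in\lambda^{*}$ with $cb=a$; since $\varepsilon\in\lambda^{*}$, in particular $a\in\operatorname{suff}(a)$ for every $a\in\lambda^{*}$, a fact I will use repeatedly.

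Assume $(s,a_{1}^{-1}a_{2})*(t,b_{1}^{-1}b_{2})=(r,c_{1}^{-1}c_{2})\neq\boldsymbol{0}$. Then the product is computed by one of the first two branches of \eqref{eq-1}. First I would treat the case in which there exists $u\in\lambda^{*}$ with $b_{1}=ua_{2}$: then by \eqref{eq-1} we have $(r,c_{1}^{-1}c_{2})=\left(\theta^{|u|}(s)t,(ua_{1})^{-1}b_{2}\right)$, so $c_{1}=ua_{1}$ and $c_{2}=b_{2}$. From $c_{1}=ua_{1}$ we get $a_{1}\in\operatorname{suff}(c_{1})$, and from $c_{2}=b_{2}$ we get $b_{2}\in\operatorname{suff}(c_{2})$ (with witness $\varepsilon$). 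Second, in the case in which there exists $v\in\lambda^{*}$ with $a_{2}=vb_{1}$: then $(r,c_{1}^{-1}c_{2})=\left(s\theta^{|v|}(t),a_{1}^{-1}vb_{2}\right)$, so $c_{1}=a_{1}$ and $c_{2}=vb_{2}$; hence $a_{1}\in\operatorname{suff}(c_{1})$ (witness $\varepsilon$) and $b_{2}\in\operatorname{suff}(c_{2})$ (witness $v$). Since these two cases exhaust all possibilities for a nonzero product, the claim follows.

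I do not expect any genuine obstacle here: the statement is essentially a direct read-off of the first two components of the word part in \eqref{eq-1}. The only point requiring a little care is to remember that $\operatorname{suff}$ is defined via $\lambda^{*}$ (so $\varepsilon$ is an admissible prefix), which is exactly what lets the ``equal'' subcases count as suffixes; this is also why the lemma is stated with $\operatorname{suff}$ rather than $\operatorname{suff^{o}}$.
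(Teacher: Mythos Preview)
Your proof is correct and follows essentially the same case analysis as the paper's: both split on which of $a_{2}$, $b_{1}$ is a suffix of the other and then read off $c_{1}$ and $c_{2}$ from \eqref{eq-1}. The only cosmetic difference is the order of the two cases and your explicit remark about the witness $\varepsilon$.
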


\begin{proof}
Розглянемо можливі випадки, коли $(r,c_{1}^{-1}c_{2})\neq\boldsymbol{0}$.

1. Існує слово $u\in\lambda^{*}$ таке, що $a_{2}=ub_{1}$. Тоді
\begin{align*}
(s,a_{1}^{-1}a_{2})*(t,b_{1}^{-1}b_{2})=(s,a_{1}^{-1}ub_{1})*(t,b_{1}^{-1}b_{2})=(s\cdot\theta^{|u|}(t),a_{1}^{-1}ub_{2}).
\end{align*}

2. Існує слово $v\in\lambda^{*}$ таке, що $b_{1}=va_{2}$. Тоді
\begin{align*}
(s,a_{1}^{-1}a_{2})*(t,b_{1}^{-1}b_{2})=(s,a_{1}^{-1}a_{2})*(t,(va_{2})^{-1}b_{2})=(\theta^{|v|}(s)\cdot t,(va_{1})^{-1}b_{2}).
\end{align*}

Отже, $a_{1}\in\operatorname{suff}(c_{1})$ і $b_{2}\in\operatorname{suff}(c_{2})$.
\end{proof}

\begin{theorem}\label{theorem-1}
Нехай $(s,a_{1}^{-1}a_{2})$ і $(t,b_{1}^{-1}b_{2})$ -- довільні ненульові елементи напівгрупи $\mathscr{P}_{\lambda}(\theta,S)$. Тоді:
\begin{itemize}
\itemsep=0pt
\parskip=1pt
  \item[1)] $(s,a_{1}^{-1}a_{2})\mathscr{L}(t,b_{1}^{-1}b_{2})$ в $\mathscr{P}_{\lambda}(\theta,S)$ тоді і тільки тоді, коли $s\mathscr{L}t$ в $S$ і $a_{2}=b_{2}$;
  \item[2)] $(s,a_{1}^{-1}a_{2})\mathscr{R}(t,b_{1}^{-1}b_{2})$ в $\mathscr{P}_{\lambda}(\theta,S)$ тоді і тільки тоді, коли $s\mathscr{R}t$ в $S$ і $a_{1}=b_{1}$;
  \item[3)] $(s,a_{1}^{-1}a_{2})\mathscr{H}(t,b_{1}^{-1}b_{2})$ в $\mathscr{P}_{\lambda}(\theta,S)$ тоді і тільки тоді, коли $s\mathscr{H}t$ в $S$ і $a_{1}^{-1}a_{2}=b_{1}^{-1}b_{2}$;
  \item[4)] $(s,a_{1}^{-1}a_{2})\mathscr{D}(t,b_{1}^{-1}b_{2})$ в $\mathscr{P}_{\lambda}(\theta,S)$ тоді і тільки тоді, коли $s\mathscr{D}t$ в $S$.
\end{itemize}
\end{theorem}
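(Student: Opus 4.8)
The statements for $\mathscr{H}$ and $\mathscr{D}$ will follow formally from parts~1) and~2) via the identities $\mathscr{H}=\mathscr{L}\cap\mathscr{R}$ and $\mathscr{D}=\mathscr{L}\circ\mathscr{R}$, so the heart of the proof is~1) and~2), which are mutually dual. I would prove~1) in detail. \emph{Necessity.} Suppose $(s,a_{1}^{-1}a_{2})\mathscr{L}(t,b_{1}^{-1}b_{2})$, so that $x*(s,a_{1}^{-1}a_{2})=(t,b_{1}^{-1}b_{2})$ and $y*(t,b_{1}^{-1}b_{2})=(s,a_{1}^{-1}a_{2})$ for some $x,y\in\left(\mathscr{P}_{\lambda}(\theta,S)\right)^{1}$. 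If $x$ or $y$ is the adjoined unit the conclusion is immediate, so let $x=(r,c_{1}^{-1}c_{2})$ and $y=(p,d_{1}^{-1}d_{2})$ be honest elements of $\mathscr{P}_{\lambda}(\theta,S)$. Since neither product is $\boldsymbol{0}$, Lemma~\ref{lemma-2}, applied with $(s,a_{1}^{-1}a_{2})$ (resp.\ $(t,b_{1}^{-1}b_{2})$) as the right-hand factor, gives $a_{2}\in\operatorname{suff}(b_{2})$ and $b_{2}\in\operatorname{suff}(a_{2})$, hence $a_{2}=b_{2}$. Now expand $(r,c_{1}^{-1}c_{2})*(s,a_{1}^{-1}a_{2})$ by~\eqref{eq-1}: in each of its two non-zero branches the first coordinate is $\theta^{|u|}(r)\,s$ or $r\,\theta^{|v|}(s)$, and comparing the $P_{\lambda}$-parts with $b_{1}^{-1}b_{2}=b_{1}^{-1}a_{2}$ forces the connecting word in the second branch to be $\varepsilon$; so the first coordinate lies in $S^{1}s$, i.e.\ $t\in S^{1}s$. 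Symmetrically $s\in S^{1}t$, whence $s\mathscr{L}t$ in $S$.

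\emph{Sufficiency.} Assume $a_{2}=b_{2}$ and $s\mathscr{L}t$ in the monoid $S$; choose $p,q\in S$ with $t=ps$ and $s=qt$ and put $x=(p,b_{1}^{-1}a_{1})$, $y=(q,a_{1}^{-1}b_{1})$. Computing $x*(s,a_{1}^{-1}a_{2})$ from~\eqref{eq-1} (the relevant word is $\varepsilon$, since $a_{1}=\varepsilon a_{1}$) gives $(ps,b_{1}^{-1}a_{2})=(t,b_{1}^{-1}b_{2})$, and similarly $y*(t,b_{1}^{-1}b_{2})=(s,a_{1}^{-1}a_{2})$; thus $(s,a_{1}^{-1}a_{2})\mathscr{L}(t,b_{1}^{-1}b_{2})$. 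This proves~1), and~2) is obtained verbatim after replacing left multiplication by right multiplication, the suffix condition on $a_{2},b_{2}$ by the one on $a_{1},b_{1}$, and $S^{1}s$ by $sS^{1}$.

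For~3), intersect: $(s,a_{1}^{-1}a_{2})\mathscr{H}(t,b_{1}^{-1}b_{2})$ iff $s\mathscr{L}t$, $s\mathscr{R}t$, $a_{1}=b_{1}$ and $a_{2}=b_{2}$, which is exactly $s\mathscr{H}t$ together with $a_{1}^{-1}a_{2}=b_{1}^{-1}b_{2}$ in $P_{\lambda}$ (using uniqueness of normal forms in the polycyclic monoid). For~4): if $(s,a_{1}^{-1}a_{2})\mathscr{D}(t,b_{1}^{-1}b_{2})$ pick $(w,c_{1}^{-1}c_{2})$ with $(s,a_{1}^{-1}a_{2})\,\mathscr{R}\,(w,c_{1}^{-1}c_{2})\,\mathscr{L}\,(t,b_{1}^{-1}b_{2})$; by~1)--2) this yields $s\mathscr{R}w$ and $w\mathscr{L}t$, so $s\mathscr{D}t$. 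Conversely, if $s\mathscr{D}t$ in $S$, fix $w\in S$ with $s\mathscr{R}w\mathscr{L}t$; then by~1)--2), $(s,a_{1}^{-1}a_{2})\,\mathscr{R}\,(w,a_{1}^{-1}b_{2})\,\mathscr{L}\,(t,b_{1}^{-1}b_{2})$, whence $(s,a_{1}^{-1}a_{2})\mathscr{D}(t,b_{1}^{-1}b_{2})$.

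The main obstacle is the necessity direction of~1) and~2): one has to rule out that a cleverly chosen multiplier changes the $S$-coordinate while secretly also altering the surviving half of the $P_{\lambda}$-coordinate, and one has to control the $\theta$-twist $\theta^{|u|}(r)s$ (resp.\ $s\,\theta^{|v|}(r)$) appearing in the first coordinate. Both difficulties dissolve once Lemma~\ref{lemma-2} is used to pin down $a_{2}=b_{2}$ (resp.\ $a_{1}=b_{1}$), which also forces the exponent of $\theta$ in the dangerous branch of~\eqref{eq-1} to vanish; everything after that is routine bookkeeping with~\eqref{eq-1}.
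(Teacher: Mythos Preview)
Your proof is correct and follows essentially the same route as the paper: both directions of~1) and~2) hinge on Lemma~\ref{lemma-2} to force $a_{2}=b_{2}$ (resp.\ $a_{1}=b_{1}$), after which the branch analysis of~\eqref{eq-1} yields $s\mathscr{L}t$ (resp.\ $s\mathscr{R}t$) in $S$, and parts~3)--4) are deduced formally from $\mathscr{H}=\mathscr{L}\cap\mathscr{R}$ and $\mathscr{D}=\mathscr{L}\circ\mathscr{R}$ exactly as you do. One cosmetic remark: $\mathscr{P}_{\lambda}(\theta,S)$ is already a monoid with identity $(1_{S},\varepsilon)$, so the ``adjoined unit'' case you single out never actually arises---but this is harmless.
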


\begin{proof}

1. $(\Rightarrow)$
Нехай $(s,a_{1}^{-1}a_{2})\mathscr{L}(t,b_{1}^{-1}b_{2})$ в $\mathscr{P}_{\lambda}(\theta,S)$. Тоді існують елементи $(r,c_{1}^{-1}c_{2})$ i $(q,d_{1}^{-1}d_{2})$ напівгрупи $\mathscr{P}_{\lambda}(\theta,S)$ такі, що
\begin{equation*}
(s,a_{1}^{-1}a_{2})=(r,c_{1}^{-1}c_{2})*(t,b_{1}^{-1}b_{2}) \qquad \hbox{і} \qquad (t,b_{1}^{-1}b_{2})=(q,d_{1}^{-1}d_{2})*(s,a_{1}^{-1}a_{2}).
\end{equation*}
З першої рівності та леми \ref{lemma-2} випливає, що $b_{2}\in\operatorname{suff}(a_{2})$, а з другої рівності та леми \ref{lemma-2} випливає, що $a_{2}\in\operatorname{suff}(b_{2})$. Тому $a_{2}=b_{2}$, а це означає, що існують слова $u,v\in\lambda^{*}$ такі, що $a_{1}=ud_{2}$ і $b_{1}=vc_{2}$. Отже,
\begin{equation*}
(s,a_{1}^{-1}a_{2})=(r,c_{1}^{-1}c_{2})*(t,b_{1}^{-1}b_{2})=(\theta^{|v|}(r)t,(vc_{1})^{-1}b_{2})
\end{equation*}
і
\begin{equation*}
(t,b_{1}^{-1}b_{2})=(q,d_{1}^{-1}d_{2})*(s,a_{1}^{-1}a_{2})=(\theta^{|u|}(q)s,(ud_{1})^{-1}a_{2}).
\end{equation*}
З першої рівності випливає, що $s=\theta^{|v|}(r)t$, а з другої рівності випливає, що $t=\theta^{|u|}(q)s$. Отже, $s\mathscr{L}t$ в напівгрупі $S$.

$(\Leftarrow)$
Нехай $(s,a_{1}^{-1}a_{2})$ і $(t,b_{1}^{-1}b_{2})$ -- ненульові елементи напівгрупи $\mathscr{P}_{\lambda}(\theta,S)$ такі, що $s\mathscr{L}t$ в $S$ і $a_{2}=b_{2}$. Тоді існують елементи $r$ і $q$ напівгрупи $S$ такі, що $s=rt$ і $t=qs$. Тому
\begin{equation*}
(s,a_{1}^{-1}a_{2})=(r,a_{1}^{-1}b_{1})*(t,b_{1}^{-1}b_{2})\qquad \hbox{і} \qquad (t,b_{1}^{-1}b_{2})=(q,b_{1}^{-1}a_{1})*(s,a_{1}^{-1}a_{2}).
\end{equation*}
Отже, $(s,a_{1}^{-1}a_{2})\mathscr{L}(t,b_{1}^{-1}b_{2})$ в напівгрупі $\mathscr{P}_{\lambda}(\theta,S)$.

2. $(\Rightarrow)$
Нехай $(s,a_{1}^{-1}a_{2})\mathscr{R}(t,b_{1}^{-1}b_{2})$ в  $\mathscr{P}_{\lambda}(\theta,S)$. Тоді існують елементи $(r,c_{1}^{-1}c_{2})$ i $(q,d_{1}^{-1}d_{2})$ напівгрупи $\mathscr{P}_{\lambda}(\theta,S)$ такі, що
\begin{equation*}
(s,a_{1}^{-1}a_{2})=(t,b_{1}^{-1}b_{2})*(r,c_{1}^{-1}c_{2})\qquad \hbox{і} \qquad (t,b_{1}^{-1}b_{2})=(s,a_{1}^{-1}a_{2})*(q,d_{1}^{-1}d_{2}).
\end{equation*}
З першої рівності та леми \ref{lemma-2} випливає, що $b_{1}\in\operatorname{suff}(a_{1})$, а з другої рівності та леми~\ref{lemma-2} випливає, що $a_{1}\in\operatorname{suff}(b_{1})$. Тому $a_{1}=b_{1}$, а це означає, що існують слова $u,v\in\lambda^{*}$ такі, що $a_{2}=ud_{1}$ і $b_{2}=vc_{1}$. Отож, отримуємо, що
\begin{equation*}
(s,a_{1}^{-1}a_{2})=(t,b_{1}^{-1}b_{2})*(r,c_{1}^{-1}c_{2})=(t\theta^{|v|}(r),b_{1}^{-1}vc_{2})
\end{equation*}
і
\begin{equation*}
(t,b_{1}^{-1}b_{2})=(s,a_{1}^{-1}a_{2})*(q,d_{1}^{-1}d_{2})=(s\theta^{|u|}(q),a_{1}^{-1}ud_{2}).
\end{equation*}
З першої рівності випливає, що $s=t\theta^{|v|}(r)$, а з другої рівності випливає, що $t=s\theta^{|u|}(q)$. Отже, $s\mathscr{R}t$ в напівгрупі $S$.

$(\Leftarrow)$
Нехай $(s,a_{1}^{-1}a_{2})$ і $(t,b_{1}^{-1}b_{2})$ -- ненульові елементи напівгрупи $\mathscr{P}_{\lambda}(\theta,S)$ такі, що $s\mathscr{L}t$ в $S$ і $a_{1}=b_{1}$. Тоді існують елементи $r$ і $q$ напівгрупи $S$ такі, що $s=tr$ і $t=sq$. Тому
\begin{equation*}
(s,a_{1}^{-1}a_{2})=(t,b_{1}^{-1}b_{2})*(r,b_{2}^{-1}b_{2})\qquad \hbox{і} \qquad (t,b_{1}^{-1}b_{2})=(s,a_{1}^{-1}a_{2})*(q,a_{2}^{-1}a_{2}).
\end{equation*}
Отже, $(s,a_{1}^{-1}a_{2})\mathscr{R}(t,b_{1}^{-1}b_{2})$ в напівгрупі $\mathscr{P}_{\lambda}(\theta,S)$.

3. Випливає з тверджень 1 і 2.

4. $(\Rightarrow)$
Нехай $(s,a_{1}^{-1}a_{2})\mathscr{D}(t,b_{1}^{-1}b_{2})$ у напівгрупі $\mathscr{P}_{\lambda}(\theta,S)$. Тоді існує елемент $(r,c_{1}^{-1}c_{2})\in \mathscr{P}_{\lambda}(\theta,S)$ такий, що  $(s,a_{1}^{-1}a_{2})\mathscr{L}(r,c_{1}^{-1}c_{2})$ і $(r,c_{1}^{-1}c_{2})\mathscr{L}(t,b_{1}^{-1}b_{2})$. За тверд\-женнями 1 і 2 отримаємо, що  $s\mathscr{L}r$ і $r\mathscr{R}t$ в $S$, а отже, $s\mathscr{D}t$ у напівгрупі $S$.

$(\Leftarrow)$
Нехай $(s,a_{1}^{-1}a_{2})$ і $(t,b_{1}^{-1}b_{2})$ -- ненульові елементи напівгрупи $\mathscr{P}_{\lambda}(\theta,S)$ такі, що $s\mathscr{D}t$ в $S$. Тоді існує елемент $r$ напівгрупи $S$ такий, що $s\mathscr{L}r$ і $r\mathscr{R}t$. Тому $(s,a_{1}^{-1}a_{2})\mathscr{L}(r,b_{1}^{-1}a_{2})$ і $(r,b_{1}^{-1}a_{2})\mathscr{R}(t,b_{1}^{-1}b_{2})$. Отже, $(s,a_{1}^{-1}a_{2})\mathscr{D}(t,b_{1}^{-1}b_{2})$ у напівгрупі $\mathscr{P}_{\lambda}(\theta,S)$.
\end{proof}

З теореми \ref{theorem-1} і наслідку \ref{corollary-2} випливають такі два наслідки.

\begin{corollary}\label{corollary-3}
Напівгрупа $\mathscr{P}_{\lambda}(\theta,S)$ -- комбінаторна тоді і тільки тоді, коли напівгрупа $S$ -- комбінаторна.
\end{corollary}

\begin{corollary}\label{corollary-5}
Напівгрупа $\mathscr{P}_{\lambda}(\theta,S)$ -- 0-біпроста тоді і тільки тоді, коли напівгрупа $S$ -- бі\-проста.
\end{corollary}

%\begin{lemma}\label{lemma-1}
%\textcolor[rgb]{1.00,0.00,0.00}{Якщо $(s,a_{1}^{-1}a_{2})$ -- елемент напівгрупи $\mathscr{P}_{\lambda}(\theta,S)$ такий, що $s\in H_{S}(1)$, то:
%$$(s,a_{1}^{-1}a_{2})*\mathscr{P}_{\lambda}(\theta,S)=S\times (a^{-1}P_{\lambda})$$
%$$\mathscr{P}_{\lambda}(\theta,S)*(s,a_{1}^{-1}a_{2})=S\times (P_{\lambda}a_{2}).$$}
%\end{lemma}

%\begin{proof}
%\textcolor[rgb]{1.00,0.00,0.00}{Нехай $(t,a_{1}^{-1}b_{1}^{-1}b_{2})\in S\times (a^{-1}P_{\lambda})$ і $s^{-1}$ -- обернений елемент до елемента $s$. Розглянемо можливі випадки:}

%\textcolor[rgb]{1.00,0.00,0.00}{1) якщо $b_{1}=\varepsilon$, то $(s,a_{1}^{-1}a_{2})*(s^{-1}t,a_{2}^{-1}b_{2})=(t,a_{1}^{-1}b_{2})$;}

%\textcolor[rgb]{1.00,0.00,0.00}{2) якщо $b_{1}\neq\varepsilon$, то $(s,a_{1}^{-1}a_{2})*(\left(\theta^{|a_{1}|}(s)\right)^{-1}t,a_{2}^{-1}b_{1}^{-1}b_{2})=(t,a_{1}^{-1}b_{2})$.}
%\end{proof}

\begin{proposition}\label{proposition-5}
Напівгрупа $\mathscr{P}_{\lambda}(\theta,S)$ є 0-простою для довільної напівгрупи $S$.
\end{proposition}

\begin{proof}
Нехай $(s,a_{1}^{-1}a_{2})$ і $(t,b_{1}^{-1}b_{2})$ -- довільні ненульові елементи напівгрупи $\mathscr{P}_{\lambda}(\theta,S)$ і $u$ -- непорожнє слово вільного моноїда $\lambda^{*}$. Тоді
\begin{align*}
((\theta^{|u|}(t))^{-1},a_{1}^{-1}ub_{1})&*(t,b_{1}^{-1}b_{2})*(s,(ub_{2})^{-1}a_{2})=\\
&=((\theta^{|u|}(t))^{-1}\cdot\theta^{|u|}(t),a_{1}^{-1}ub_{2})*(s,(ub_{2})^{-1}a_{2})=\\
&=(1_{S},a_{1}^{-1}ub_{2})*(s,(ub_{2})^{-1}a_{2})=\\
&=(s,a_{1}^{-1}a_{2}),
\end{align*}
звідки випливає, що напівгрупа $\mathscr{P}_{\lambda}(\theta,S)$ є 0-простою.
\end{proof}

\begin{proposition}\label{proposition-6}
Напівгрупа $\mathscr{P}_{\lambda}(\theta,S)$ --- конгруенц-проста тоді і тільки тоді, коли $S$ -- тривіальна напівгрупа.
\end{proposition}

\begin{proof}
Якщо $S$ -- одноелементна множина, то напівгрупа $\mathscr{P}_{\lambda}(\theta,S)$ ізоморфна $\lambda$-полі\-цик\-ліч\-но\-му моноїду $P_{\lambda}$. Оскільки $\lambda$-поліциклічний моноїд є конгруенц-простою напівгрупою (див. на\-прик\-лад \cite[теорема 2.5]{Bardyla-Gutik-2016}), то у цьому випадку напівгрупа $\mathscr{P}_{\lambda}(\theta,S)$ є конгруенц-простою.

Нехай напівгрупа $S$ не є одноелементною множиною. Тоді відношення
\begin{align*}
\mathfrak{C}=\left\{((x,a_{1}^{-1}a_{2}),(y,a_{1}^{-1}a_{2}))\colon x,y\in S,a_{1}^{-1}a_{2}\in P_{\lambda}\right\}\cup \{(\boldsymbol{0},\boldsymbol{0})\}
\end{align*}
є нетривіальною конгруенцією на $\mathscr{P}_{\lambda}(\theta,S)$, причому, очевидно, що фактор-на\-пів\-група $\mathscr{P}_{\lambda}(\theta,S)/\mathfrak{C}$ ізоморфна $\lambda$-полі\-цик\-ліч\-но\-му моноїду $P_{\lambda}$.
\end{proof}

\begin{proposition}\label{proposition-7}
Інверсна напівгрупа $\mathscr{P}_{\lambda}(\theta,S)$ є 0-E-унітарною тоді і тільки тоді, коли напівгрупа $S$ є інверсною E-унітарною та $\theta^{-1}(1_{S})=E(S)$.
\end{proposition}

\begin{proof}
$(\Rightarrow)$
Якщо напівгрупа $\mathscr{P}_{\lambda}(\theta,S)$ -- інверсна, то з наслідку \ref{corollary-2} випливає, що напівгрупа $S$ є також інверсною. Нехай $s$ -- елемент напівгрупи $S$ і $e$ -- ідемпотенти напівгрупи $S$ такі, що $es\in E(S)$. Тоді $(e,1_{P_{\lambda}})*(s,1_{P_{\lambda}})=(es,1_{P_{\lambda}})$. Оскільки $(e,1_{P_{\lambda}})$ і $(ex,1_{P_{\lambda}})$ -- ідемпотенти напівгрупи $\mathscr{P}_{\lambda}(\theta,S)$ та інверсна напівгрупа $\mathscr{P}_{\lambda}(\theta,S)$ є 0-E-унітарною, то $(x,1_{P_{\lambda}})\in E(\mathscr{P}_{\lambda}(\theta,S))$, а отже, $s$ є ідемпотентом напівгрупи $S$. Тому інверсна напівгрупа $S$ є E-унітарною.

Нехай $s$ -- елемент напівгрупи $S$ такий, що $\theta (s)=1_{S}$ і $a$ -- слово в $\lambda^{*}$ таке, що $|a|=1$. Тоді
\begin{equation*}
(1_{S},a^{-1}a)*(s,1_{P_{\lambda}})=(\theta(s),a^{-1}a)=(1_{S},a^{-1}a).
\end{equation*}
Оскільки $(1_{S},a^{-1}a)$ -- ідемпотент напівгрупи $\mathscr{P}_{\lambda}(\theta,S)$ й інверсна напівгрупа  $\mathscr{P}_{\lambda}(\theta,S)$ є 0-E-уні\-тар\-ною, то елемент $(s,a^{-1}a)$ міститься в  $E(\mathscr{P}_{\lambda}(\theta,S))$, а отже, $s$ -- ідемпотент напівгрупи $S$.

$(\Leftarrow)$
Нехай $(e,a^{-1}a)$ -- ненульовий ідемпотент напівгрупи $\mathscr{P}_{\lambda}(\theta,S)$ і $(s,b_{1}^{-1}b_{2})$ -- елемент напівгрупи $\mathscr{P}_{\lambda}(\theta,S)$ такі, що
\begin{equation*}
(e,a^{-1}a)* (s,b_{1}^{-1}b_{2})\in E(\mathscr{P}_{\lambda}(\theta,S))\setminus\{\boldsymbol{0}\}.
\end{equation*}
Розглянемо можливі випадки.

1. Якщо існує слово $u\in\lambda^{*}$ таке, що $b_{1}=ua$, то
\begin{equation*}
(e,a^{-1}a)*(s,b_{1}^{-1}b_{2})=(\theta^{|u|}(e)\cdot s,(ua)^{-1}b_{2})=(s,b_{1}^{-1}b_{2}).
\end{equation*}
Оскільки $(s,b_{1}^{-1}b_{2})$ є ідемпотентом напівгрупи $\mathscr{P}_{\lambda}(\theta,S)$, то з твердження \ref{proposition-2} випливає, що $b_{1}=b_{2}$ і $s$ є ідемпотентом напівгрупи $S$. Отже, $(s,b_{1}^{-1}b_{2})\in E(\mathscr{P}_{\lambda}(\theta,S))$.

2. Якщо існує слово $v\in\lambda^{*}$ таке, що $a=vb_{1}$, то
\begin{equation*}
(e,a^{-1}a)*(s,b_{1}^{-1}b_{2})=(e\cdot\theta^{|v|}(s),a^{-1}vb_{2}).
\end{equation*}
Оскільки $(e\cdot\theta^{|v|}(s),a^{-1}vb_{2})$ є ідемпотентом напівгрупи $\mathscr{P}_{\lambda}(\theta,S)$, то $vb_{1}=a=vb_{2}$ і $\theta^{|v|}(s)$ є ідемпотентом напівгрупи $S$. Враховуючи попереднє і те, що $\theta^{-1}(1_{S})=E(S)$ отримуємо, що $b_{1}=b_{2}$ і $s\in E(S)$. Отже, $(s,b_{1}^{-1}b_{2})$ є ідемпотентом напівгрупи $\mathscr{P}_{\lambda}(\theta,S)$.

Тому інверсна напівгрупа $\mathscr{P}_{\lambda}(\theta,S)$ -- 0-E-унітарна.
\end{proof}

\begin{proposition}\label{proposition-8}
Ненульовий елемент $(s,a_{1}^{-1}a_{2})$ напівгрупи $\mathscr{P}_{\lambda}(\theta,S)$ належить центру $Z(\mathscr{P}_{\lambda}(\theta,S))$ тоді і тільки тоді, коли $s\in Z(S)$, $s=\theta(s)$ і $a_{1}^{-1}a_{2}=1_{P_{\lambda}}$.
\end{proposition}

\begin{proof}
$(\Rightarrow)$
Нехай $(s,a_{1}^{-1}a_{2})$ -- ненульовий елемент з центру напівгрупи $\mathscr{P}_{\lambda}(\theta,S)$. Тоді
\begin{align*}
(s,a_{1}^{-1}a_{2}a_{1})=(s,a_{1}^{-1}a_{2})*(1_{S},a_{1})=(1_{S},a_{1})*(s,a_{1}^{-1}a_{2})=(s,a_{2})
\end{align*}
і
\begin{align*}
(s,a_{1}^{-1})=(s,a_{1}^{-1}a_{2})*(1_{S},a_{2}^{-1})=(1_{S},a_{2}^{-1})*(s,a_{1}^{-1}a_{2})=(s,(a_{1}a_{2})^{-1}a_{2}).
\end{align*}
З першої рівності випливає, що $a_{1}=\varepsilon$, а з другої випливає, що $b=\varepsilon$. Тому $a_{1}^{-1}a_{2}=1_{P_{\lambda}}$.

Нехай $(s,1_{P_{\lambda}})$ -- ненульовий елемент з центру напівгрупи $\mathscr{P}_{\lambda}(\theta,S)$ і $t$ -- довільний елемент напівгрупи $S$. Тоді
\begin{align*}
(s\cdot t,1_{P_{\lambda}})=(s,1_{P_{\lambda}})*(t,1_{P_{\lambda}})=(t,1_{P_{\lambda}})*(s,1_{P_{\lambda}})=(t\cdot s,1_{P_{\lambda}}).
\end{align*}
Тому $s\cdot t=t\cdot s$, а отже, $s$ належить центру напівгрупи $S$.

Нехай $(s,1_{P_{\lambda}})$ -- ненульовий елемент з центру напівгрупи $\mathscr{P}_{\lambda}(\theta,S)$ і $a$ -- слово з $\lambda^{*}$ таке, що $|a|=1$. Тоді
\begin{align*}
(s,a)=(s,1_{P_{\lambda}})*(1_{S},a)=(1_{S},a)*(s,1_{P_{\lambda}})=(\theta(s),a).
\end{align*}
Звідси випливає, що $s=\theta(s)$.

$(\Leftarrow)$
Нехай $(t,a_{1}^{-1}a_{2})$ -- довільний елемент напівгрупи $\mathscr{P}_{\lambda}(\theta,S)$ і $s$ -- елемент з центру напівгрупи $S$ такий, що $s=\theta(s)$. Тоді
\begin{align*}
(s,1_{P_{\lambda}})*(t,a_{1}^{-1}a_{2})&=(\theta^{|a_{1}|}(s)\cdot t,a_{1}^{-1}a_{2})=\\
&=(s\cdot t,a_{1}^{-1}a_{2})=\\
&=(t\cdot s,a_{1}^{-1}a_{2})=\\
&=(t\cdot \theta^{|a_{2}|}(s),a_{1}^{-1}a_{2})=\\
&=(t,a_{1}^{-1}a_{2})*(s,1_{P_{\lambda}}).
\end{align*}
Тому $(s,1_{P_{\lambda}})$ належить центру напівгрупи $\mathscr{P}_{\lambda}(\theta,S)$.
\end{proof}

\begin{proposition}\label{proposition-9}
Елемент $(s,a_{1}^{-1}a_{2})$ з напівгрупи $\mathscr{P}_{\lambda}(\theta,S)$ належить групі одиниць $H(1_{\mathscr{P}_{\lambda}(\theta,S)})$ тоді і тільки тоді, коли $s\in H(1_{S})$ і $a_{1}^{-1}a_{2}=1_{P_{\lambda}}$.
\end{proposition}
\begin{proof}

$(\Rightarrow)$
З леми \ref{lemma-2} випливає таке: якщо елемент $(s,a_{1}^{-1}a_{2})$ належить групі одиниць $H(1_{\mathscr{P}_{\lambda}(\theta,S)})$ напівгрупи $\mathscr{P}_{\lambda}(\theta,S)$, то $a_{1}^{-1}a_{2}=1_{P_{\lambda}}$.

Нехай елемент $(s,1_{P_{\lambda}})$ належить групі одиниць $H(1_{\mathscr{P}_{\lambda}(\theta,S)})$. Тоді існує елемент $(t,1_{P_{\lambda}})$ групи одиниць напівгрупи $\mathscr{P}_{\lambda}(\theta,S)$ такий, що
\begin{align*}
(st,1_{P_{\lambda}})=(s,1_{P_{\lambda}})*(t,1_{P_{\lambda}})=(t,1_{P_{\lambda}})*(s,1_{P_{\lambda}})=(ts,1_{P_{\lambda}})=(1_{S},1_{P_{\lambda}}).
\end{align*}
Тому $s$ є елементом групи одиниць напівгрупи $S$.

$(\Leftarrow)$
Нехай $s$ -- елемент групи одиниць напівгрупи $S$. Тоді
\begin{align*}
(s,1_{P_{\lambda}})*(s^{-1},1_{P_{\lambda}})&=(s\cdot s^{-1},1_{P_{\lambda}})=\\
&=(s^{-1}\cdot s,1_{P_{\lambda}})=\\
&=(s^{-1},1_{P_{\lambda}})*(s,1_{P_{\lambda}})=\\
&=(1_{S},1_{P_{\lambda}}).
\end{align*}
Отже, $(s,1_{P_{\lambda}})$ є елементом групи одиниць напівгрупи $\mathscr{P}_{\lambda}(\theta,S)$.
\end{proof}

\begin{proposition}\label{proposition-11}
Множини $\left\{x\in\mathscr{P}_{\lambda}(\theta,S)\colon a*x=b\right\}$ і $\left\{x\in\mathscr{P}_{\lambda}(\theta,S)\colon x*a=b\right\}$ -- скінченні для будь-яких ненульових елементів $a,b\in\mathscr{P}_{\lambda}(\theta,S)$ тоді і тільки тоді, коли множини $\left\{x\in S\colon sx=t\right\}$, $\left\{x\in S\colon xs=t\right\}$ і $\theta^{-1}(s)$ -- скінченні для будь-яких $s,t\in S$.
\end{proposition}

\begin{proof}
$(\Rightarrow)$
Якщо множина $\left\{x\in S\colon sx=t\right\}$ -- нескінченна для деяких $s,t\in S$, то множина $\left\{x\in\mathscr{P}_{\lambda}(\theta,S)\colon (s,1_{P_{\lambda}})*x=(t,1_{P_{\lambda}})\right\}$ -- нескінченна. Якщо ж  множина $\left\{x\in S\colon xs=t\right\}$ -- нескінченна для деяких $s,t\in S$, то множина
\begin{equation*}
\left\{x\in\mathscr{P}_{\lambda}(\theta,S)\colon x*(s,1_{P_{\lambda}})=(t,1_{P_{\lambda}})\right\}
\end{equation*}
-- нескінченна. Отже, множини $\left\{x\in S\colon sx=t\right\}$ і $\left\{x\in S\colon xs=t\right\}$ -- скінченні для будь-яких $s,t\in S$.

Припустимо, що множина $\theta^{-1}(t)$ -- нескінченна для деякого елемента $t\in S$. Тоді множина
\begin{equation*}
\left\{(x,1_{P_{\lambda}})\in\mathscr{P}_{\lambda}(\theta,S)\colon (1_{S},a)*(x,1_{P_{\lambda}})=(t,a)\right\}
\end{equation*}
є нескінченною для довільного слова $a\in\lambda^{*}$.

$(\Leftarrow)$
Нехай $(s,a_{1}^{-1}a_{2}),(t,b_{1}^{-1}b_{2})$ -- довільні елементи напівгрупи $\mathscr{P}_{\lambda}(\theta,S)$. Доведемо, що множина
\begin{equation*}
\left\{(x,y_{1}^{-1}y_{2})\in\mathscr{P}_{\lambda}(\theta,S)\colon (s,a_{1}^{-1}a_{2})*(x,y_{1}^{-1}y_{2})=(t,b_{1}^{-1}b_{2})\right\}
\end{equation*}
є  скінченною.
З твердження 2.7 \cite{Bardyla-Gutik-2016} випливає, що існує скінченна кількість елементів $y_{1}^{-1}y_{2}$ $\lambda$-поліциклічного моноїда $P_{\lambda}$ таких, що $a_{1}^{-1}a_{2}\cdot y_{1}^{-1}y_{2}=b_{1}^{-1}b_{2}$. Нехай $c_{1}^{-1}c_{2}$ -- один з цих елементів. Розглянемо можливі випадки.

1. Існує слово $u\in\lambda^{*}$ таке, що $b_{1}=ua_{2}$. Тоді
\begin{align*}
(s,a_{1}^{-1}a_{2})*(x,y_{1}^{-1}y_{2})=(\theta^{|u|}(s)x,a_{1}^{-1}u^{-1}y_{2})=(t,b_{1}^{-1}b_{2}).
\end{align*}
Оскільки множини $\{x\in S\colon  sx=t\}$ і $\{x\in S\colon xs=t\}$ -- скінченні для будь-яких $s,t\in S$, то існує скінченна кількість елементів $x$ напівгрупи $S$ таких, що $\theta^{|u|}(s)x=t$.

2. Існує слово $u\in\lambda^{*}$ таке, що $a_{2}=ub_{1}$. Тоді
\begin{align*}
(s,a_{1}^{-1}a_{2})*(x,y_{1}^{-1}y_{2})=(s\theta^{|u|}(x),a_{1}^{-1}uy_{2})=(t,b_{1}^{-1}b_{2}).
\end{align*}
Оскільки множини $\{x\in S\colon sx=t\}$, $\{x\in S\colon xs=t\}$ і $\theta^{-1}(s)$ -- скінченні для будь-яких $s,t\in S$, то існує скінченна кількість елементів $x$ напівгрупи $S$ таких, що $s\theta^{|u|}(x)=t$.

Позаяк скінченне об'єднання скінченних множин є скінченною множиною, то множина
\begin{equation*}
\left\{(x,y_{1}^{-1}y_{2})\in\mathscr{P}_{\lambda}(\theta,S)\colon (s,a_{1}^{-1}a_{2})*(x,y_{1}^{-1}y_{2})=(t,b_{1}^{-1}b_{2})\right\}
\end{equation*}
скінченна. Аналогічно доводиться, що множина
\begin{equation*}
\left\{(x,y_{1}^{-1}y_{2})\in\mathscr{P}_{\lambda}(\theta,S)\colon (x,y_{1}^{-1}y_{2})*(s,a_{1}^{-1}a_{2})=(t,b_{1}^{-1}b_{2})\right\}
\end{equation*}
скінченна.
\end{proof}

З означення напівгрупової операції на напівгрупі $\mathscr{P}_{\lambda}(\theta,S)$ і твердження 2.7 \cite{Bardyla-Gutik-2016} випливає такий наслідок

\begin{corollary}\label{corollary-12}
Для довільних $a,a_1,b,b_1\in \lambda^*$, $s,t\in S$ кожна з множин
\begin{equation*}
  A=\big\{(t,u^{-1}v)\in \mathscr{P}_{\lambda}(\theta,S)\colon (s,a^{-1}b)*(t,u^{-1}v)\in S_{a_1^{-1}b_1}\big\},
\end{equation*}
чи
\begin{equation*}
  B=\big\{(t,u^{-1}v)\in \mathscr{P}_{\lambda}(\theta,S)\colon (t,u^{-1}v)*(s,a^{-1}b)\in S_{a_1^{-1}b_1}\big\}
\end{equation*}
перетинає не більше, ніж скінченну кількість підмножин вигляду $S_{c^{-1}d}$ напівгрупи $\mathscr{P}_{\lambda}(\theta,S)$.
\end{corollary}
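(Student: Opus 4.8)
The plan is to reduce the assertion to the corresponding statement about the polycyclic monoid $P_{\lambda}$ alone, namely Proposition~2.7 of \cite{Bardyla-Gutik-2016}, which is already invoked in the proof of Proposition~\ref{proposition-11}. The crucial remark is that, by formula~\eqref{eq-1}, the polycyclic coordinate of a product $(s,a^{-1}b)*(t,u^{-1}v)$ does not depend on $s$ and $t$: whenever the product is different from $\boldsymbol{0}$ it equals the product $(a^{-1}b)\cdot(u^{-1}v)$ computed in $P_{\lambda}$, while the product equals $\boldsymbol{0}$ exactly when $(a^{-1}b)\cdot(u^{-1}v)=0_{P_{\lambda}}$. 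Consequently $(s,a^{-1}b)*(t,u^{-1}v)\in S_{a_1^{-1}b_1}$ if and only if $(a^{-1}b)\cdot(u^{-1}v)=a_1^{-1}b_1$ in $P_{\lambda}$; in particular this condition imposes no restriction on $t$, since the first coordinate of the product always belongs to $S$.

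First I would verify the above bookkeeping remark by a short case split on the two non-zero alternatives in~\eqref{eq-1} (this is exactly the computation implicit in the proofs of Proposition~\ref{proposition-1} and Proposition~\ref{proposition-11}). Then, setting
\[
  Y=\big\{\,y\in P_{\lambda}\setminus\{0_{P_{\lambda}}\}\colon (a^{-1}b)\cdot y=a_1^{-1}b_1\,\big\},
\]
the remark yields $A=\bigcup_{y\in Y}S_{y}$, a disjoint union of slices, and symmetrically $B=\bigcup_{z\in Z}S_{z}$ with $Z=\big\{\,z\in P_{\lambda}\setminus\{0_{P_{\lambda}}\}\colon z\cdot(a^{-1}b)=a_1^{-1}b_1\,\big\}$. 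By Proposition~2.7 of \cite{Bardyla-Gutik-2016} (together with its left-right dual, which holds because $P_{\lambda}$ is an inverse semigroup) the sets $Y$ and $Z$ are finite. If $Y$ is empty then $A$ is empty; otherwise $A$ is a finite union of slices $S_{c^{-1}d}$, and likewise for $B$, which is the claim.

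The only point requiring any care is the bookkeeping remark of the first step --- that the polycyclic coordinate of a product in $\mathscr{P}_{\lambda}(\theta,S)$ is literally the $P_{\lambda}$-product of the polycyclic coordinates, and that $\boldsymbol{0}$ occurs precisely over the zero of $P_{\lambda}$ --- but this is routine given~\eqref{eq-1}. As an alternative, one can avoid citing \cite{Bardyla-Gutik-2016} and argue directly, mirroring the case analysis in the proof of Proposition~\ref{proposition-11}: in the alternative $u=wb$ the equation $(wa)^{-1}v=a_1^{-1}b_1$ determines $w$, hence $u^{-1}v$, uniquely (it is solvable only when $a\in\operatorname{suff}(a_1)$), so this contributes at most one slice; in the alternative $b=wu$ the equation $a^{-1}wv=a_1^{-1}b_1$ forces $a=a_1$ and leaves $w$ ranging over the common prefixes of $b$ and $b_1$, of which there are at most $\min\{|b|,|b_1|\}+1$, again finitely many slices. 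The analogous discussion handles $B$. I expect the reduction to $P_{\lambda}$ to give the shortest write-up, with the finiteness of $Y$ and $Z$ as the sole imported ingredient.
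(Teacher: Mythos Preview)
Your proposal is correct and is precisely the approach the paper takes: the paper does not give a standalone proof but records, in the sentence preceding the corollary, that it follows from the definition of the semigroup operation on $\mathscr{P}_{\lambda}(\theta,S)$ together with Proposition~2.7 of \cite{Bardyla-Gutik-2016}. Your bookkeeping remark (that the polycyclic coordinate of a product is the $P_{\lambda}$-product of the coordinates, with $\boldsymbol{0}$ occurring exactly over $0_{P_{\lambda}}$) and the ensuing decomposition $A=\bigcup_{y\in Y}S_{y}$ with $Y$ finite by Proposition~2.7 of \cite{Bardyla-Gutik-2016} is exactly the intended elaboration.
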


\begin{proposition}\label{proposition-20}
Нехай $S,T$ -- довільні напівгрупи та $\mathfrak{C}_{1}$, $\mathfrak{C}_{2}$ -- конгруенції на напівгрупах $S$ і $T$, відповідно. Якщо $f\colon S\rightarrow T$ -- ізоморфізм такий, що $s\mathfrak{C}_{1}t$ тоді і тільки тоді, коли $f(s)\mathfrak{C}_{1}f(t)$ для довільних $s,t\in S$, то відображення $\overline{f}\colon S/\mathfrak{C}_{1}\rightarrow T/\mathfrak{C}_{2}$, означене $\overline{f}([s]_{\mathfrak{C}_{1}})=[f(s)]_{\mathfrak{C}_{2}}$, є ізоморфізмом.
\end{proposition}

\begin{proof}
Очевидно, що відображення $\overline{f}$ визначено коректно та є бієктивним. Доведемо, що $\overline{f}$ зберігає операцію. Нехай $[a]_{\mathfrak{C}_{1}},[b]_{\mathfrak{C}_{1}}\in S/\mathfrak{C}_{1}$. Оскільки $\mathfrak{C}_{1}$ -- конгруенція на напівгрупі $S$, то
\begin{align*}
\overline{f}([a]_{\mathfrak{C}_{1}}\cdot[b]_{\mathfrak{C}_{1}})=\overline{f}([ab]_{\mathfrak{C}_{1}})= [f(ab)]_{\mathfrak{C}_{2}}=[f(a)\cdot f(b)]_{\mathfrak{C}_{2}}=[f(a)]_{\mathfrak{C}_{2}}\cdot[f(b)]_{\mathfrak{C}_{2}},
\end{align*}
звідки випливає наше твердження.
\end{proof}

\begin{proposition}\label{proposition-19}
Нехай $S$ i $T$~--- моноїди та $\theta\colon S\to H_S(1_T)$ i $\phi\colon T\to H_T(1_T)$~--- гомоморфізми.
Якщо $f\colon\mathscr{P}_{\lambda}(\theta,S)\rightarrow\mathscr{P}_{\lambda}(\phi,T)$ -- ізоморфізм, то напівгрупа $S$ ізоморфна напівгрупі $T$ та існує  автоморфізм $f_{p}$ поліциклічного моноїда $P_{\lambda}$ такий, що $f(S_{a_{1}^{-1}a_{2}})=T_{f_{p}(a_{1}^{-1}a_{2})}$.
\end{proposition}

\begin{proof}
Оскільки
\begin{align*}
S_{1_{P_{\lambda}}}=\left\{a\in\mathscr{P}_{\lambda}(\theta,S)\colon a*b\neq\boldsymbol{0}\hbox{ і }a*b\neq\boldsymbol{0} \hbox{ для довільного }b\in\mathscr{P}_{\lambda}(\theta,S)\right\}
\end{align*}
і
\begin{align*}
T_{1_{P_{\lambda}}}=\left\{a\in\mathscr{P}_{\lambda}(\phi,T)\colon a*b\neq\boldsymbol{0}\hbox{ і }a*b\neq\boldsymbol{0}\hbox{ для довільного }b\in\mathscr{P}_{\lambda}(\phi,T)\right\},
\end{align*}
то $f(S_{1_{P_{\lambda}}})=T_{1_{P_{\lambda}}}$. Отже, за твердженням \ref{proposition-4a} напівгрупа $S$ ізоморфна напівгрупі $T$.

Визначимо конгруенцію $\mathfrak{C}_{1}$ на напівгрупі $\mathscr{P}_{\lambda}(\theta,S)$ так: $(s,a_{1}^{-1}a_{2})\mathfrak{C}_{1}(t,b_{1}^{-1}b_{2})$, якщо $a_{1}^{-1}a_{2}=b_{1}^{-1}b_{2}$ і $\boldsymbol{0}\mathfrak{C}_{1}\boldsymbol{0}$, та аналогічно: $(s,a_{1}^{-1}a_{2})\mathfrak{C}_{1}(t,b_{1}^{-1}b_{2})$, якщо $a_{1}^{-1}a_{2}=b_{1}^{-1}b_{2}$ і $\boldsymbol{0}\mathfrak{C}_{1}\boldsymbol{0}$ -- конгруенцію $\mathfrak{C}_{2}$ на напівгрупі $\mathscr{P}_{\lambda}(\phi,T)$. Доведемо, що $(s,a_{1}^{-1}a_{2})\mathfrak{C}_{1}(t,b_{1}^{-1}b_{2})$ тоді і тільки тоді, коли $f((s,a_{1}^{-1}a_{2}))\mathfrak{C}_{2}f((t,b_{1}^{-1}b_{2}))$.
Зафіксуємо  $(s,a^{-1}),(s,a)\in\mathscr{P}_{\lambda}(\theta,S)$. Тоді $(1_{S},a)*(s,a^{-1})=(s,1_{P_{\lambda}})$ і $(s,a)*(1_{S},a^{-1})=(s,1_{P_{\lambda}})$. Оскільки $f((s,1_{P_{\lambda}}))=(t,1_{P_{\lambda}})$ для деякого елемента $t\in T$, то $f((s,a^{-1}))=(r,b^{-1})$ і $f((s,a))=(q,b)$ для деяких $r,q\in T$ і $b\in\lambda^{*}$. Зауважимо, що слово $b$ не залежить від вибору елемента $s$. Тому $f(s,a^{-1})\in S_{b^{-1}}$ і $f(s,a)\in S_{b}$ для довільного елемента $s\in S$.

Нехай $(s,a_{1}^{-1}a_{2}),(t,a_{1}^{-1}a_{2})$ -- довільні елементи напівгрупи $\mathscr{P}_{\lambda}(\theta,S)$. З тео\-реми \ref{theorem-1} випливає, що $(s,a_{1}^{-1})\mathscr{R}(s,a_{1}^{-1})$, $(s,a_{2})\mathscr{L}(s,a_{1}^{-1}a_{2})$, $(t,a_{1}^{-1})\mathscr{R}(t,a_{1}^{-1}a_{2})$ і $(t,a_{2})\mathscr{L}(t,a_{1}^{-1}a_{2})$ в $\mathscr{P}_{\lambda}(\theta,S)$. З вище доведеного та того, що ізоморфізм зберігає $\mathscr{R}$-  і $\mathscr{L}$-класи випливає, що $(s,a_{1}^{-1}a_{2})\mathfrak{C}_{1}(t,b_{1}^{-1}b_{2})$ тоді і тільки тоді, коли $f((s,a_{1}^{-1}a_{2}))\mathfrak{C}_{2}f((t,b_{1}^{-1}b_{2}))$. Отже, за твердженням \ref{proposition-20} ізоморфізм $f$ породжує ізоморфізм $\overline{f}\colon \mathscr{P}_{\lambda}(\theta,S)/\mathfrak{C}_{1}\rightarrow \mathscr{P}_{\lambda}(\phi,T)/\mathfrak{C}_{1}$. Оскільки кожна з напівгруп $\mathscr{P}_{\lambda}(\theta,S)/\mathfrak{C}_{1}$ i $\mathscr{P}_{\lambda}(\phi,T)/\mathfrak{C}_{1}$  ізоморфна $\lambda$-поліциклічному моноїдові, то існує автоморфізм $f_{p}$ поліциклічного моноїда $P_{\lambda}$ такий, що $\overline{f}([(s,a_{1}^{-1}a_{2})]_{\mathfrak{C}_{1}})=\big[(s,f_{p}(a_{1}^{-1}a_{2}))\big]_{\mathfrak{C}_{2}}$.
\end{proof}

Якщо $f\colon\mathscr{P}_{\lambda}(\theta,S)\rightarrow\mathscr{P}_{\lambda}(\phi,T)$ -- ізоморфізм, то  через $f_{S}^{T}$ позначимо ізоморфізм між напівгрупами $S$ і $T$, який породжується ізоморфізмом $f|_{S_{1_{P_{\lambda}}}}$, що є звуженням ізоморфізму $f$ на підмоноїд $S_{1_{P_{\lambda}}}$, який ізоморфний поліциклічному моноїдові $P_{\lambda}$.

З твердження~\ref{proposition-19} випливає такий наслідок:

\begin{corollary}
Нехай групи одиниць напівгруп $S$ і $T$ є тривіальними. Тоді напівгрупи $\mathscr{P}_{\lambda}(\theta,S)$ і $\mathscr{P}_{\lambda}(\phi,T)$ є ізоморфними тоді і тільки тоді, коли напівгрупа $S$ ізоморфна напівгрупі $T$.
\end{corollary}

\begin{theorem}\label{theorem-3}
Нехай групи одиниць напівгруп $S$ і $T$ є тривіальними. Якщо $f\colon\mathscr{P}_{\lambda}(\theta,S)\rightarrow\mathscr{P}_{\lambda}(\phi,T)$~-- ізоморфізм, то  $f((s,a_{1}^{-1}a_{2}))=(f_{S}^{T}(s),f_{p}(a_{1}^{-1}a_{2}))$.
\end{theorem}

\begin{proof}
Нехай $f((1_{S},a))=(s,b)$ і $f((1_{S},a^{-1}))=(t,b^{-1})$. Тоді з рівностей
\begin{equation*}
(1_{S},a)*(1_{S},a^{-1})=(1_{S},1_{P_{\lambda}})\qquad \hbox{i} \qquad (1_{S},a^{-1})*(1_{S},a)=(1_{S},a^{-1}a)
\end{equation*}
і твердження \ref{proposition-4a} випливає, що $s,t\in H(1_{T})$, а отже, $s=t=1_{T}$. Розглянемо рівність
\begin{equation*}
(1_{S},a_{1})*(s,a_{1}^{-1}a_{2})*(1_{S},a_{2}^{-1})=(s,1_{P_{\lambda}}).
\end{equation*}
Оскільки $f((1_{S},a_{1}))=(1_{T},f_{p}(a_{1}))$, $f((1_{S},a_{2}^{-1}))=(1_{T},f_{p}(a_{2}^{-1}))$, $f((s,a_{1}^{-1}a_{2}))=(t,f_{p}(a_{1}^{-1}a_{2}))$  і  $f((s,1_{P_{\lambda}}))=(f_{S}^{T}(s),1_{P_{\lambda}})$, то $t=f_{S}^{T}(s)$.
\end{proof}

%%%%%%%%%%%%%%%%%%%%%%%%%%%%%%%%%%%%%%%%%%%%%%%%%%%%%%%%%%%%%%%%%%%%%%%%%%%%%%%%%
\section{\textbf{Топологізація напівгрупи $\mathscr{P}_{\lambda}(\theta,S)$}}

\begin{proposition}\label{proposition-3.1}
Якщо $(S,\tau_S)$ -- гаусдорфова напівтопологічна напівгрупа та $\theta\colon S\rightarrow H(1_{S})$ -- неперервний гомоморфізм, то $(\mathscr{P}_{\lambda}(\theta,S),\tau_{\texttt{s\!t}})$ -- гаусдорфова напівтопологічна напівгрупа, де $\tau_{\texttt{s\!t}}$ -- топологія породжена базою
\begin{align*}
\mathcal{B}_{\texttt{s\!t}}=\left\{U_{a^{-1}b}\colon U\in\mathcal{B}, \; a,b\in\lambda^* \right\}\cup\{\boldsymbol{0}\}
\end{align*}
і $\mathcal{B}$ -- база топології $\tau_S$.
\end{proposition}

\begin{proof}
Сім'я підмножин $\mathcal{B}_{\texttt{s\!t}}$ задовольняє умови (B1)--(B2) \cite{Engelking-1989}, а отже, вона є базою топології $\tau_{\texttt{s\!t}}$ на напівгрупі $\mathscr{P}_{\lambda}(\theta,S)$. Очевидно, що $(\mathscr{P}_{\lambda}(\theta,S),\tau_{\texttt{s\!t}})$ -- гаусдорфовий простір.

Доведемо, що $(\mathscr{P}_{\lambda}(\theta,S),\tau_{\texttt{s\!t}})$ -- напівтопологічна напівгрупа. Нехай $s,t$ -- довільні елементи напівгрупи $S$. З нарізної неперервності операції на $(S,\tau_S)$ випливає, що для довільного відкритого околу $U(st)$ елемента $st$ в $(S,\tau_S)$ існують відкриті околи $U_{1}(s),U_{2}(t)$ елементів $s,t$ в $(S,\tau_S)$. відповідно, такі, що $U_{1}(s)\cdot t\subseteq U(st)$ і $s\cdot U_{2}(t)\subseteq U(st)$.

Нехай $(s,a_{1}^{-1}a_{2}),(t,b_{1}^{-1}b_{2})$ -- довільні елементи напівгрупи $\mathscr{P}_{\lambda}(\theta,S)$. Розглянемо можливі випадки.

1. Якщо існує слово $u\in\lambda^*$ таке, що $b_{1}=ua_{2}$, то
\begin{equation*}
(s,a_{1}^{-1}a_{2})*(t,b_{1}^{-1}b_{2})=(\theta^{|u|}(s)t,a_{1}^{-1}u^{-1}b_{2}).
\end{equation*}
Нехай $W_{a_{1}^{-1}u^{-1}b_{2}}$~--- довільний відкритий окіл точки $(\theta^{|u|}(s)t,a_{1}^{-1}u^{-1}b_{2})$ у тополо\-гіч\-ному просторі  $(\mathscr{P}_{\lambda}(\theta,S),\tau_{\texttt{s\!t}})$, де $W$ -- відкритий окіл точки $\theta^{|u|}(s)t$ в $(S,\tau_S)$. Тоді з нарізної неперервності напівгрупової операції в $(S,\tau_S)$ і з неперервності гомомор\-фіз\-му $\theta$ випливає, що існують відкриті околи $V(s)$ i $V(t)$ точок $s$ і $t$ у просторі $(S,\tau_S)$, відповідно, такі, що $\theta^{|u|}(V(s))\cdot t\subseteq W$ i $\theta^{|u|}(s)\cdot V(t)\subseteq W$. Тоді
\begin{equation*}
  V(s)_{a_{1}^{-1}a_{2}}*(t,b_{1}^{-1}b_{2}) \subseteq W_{a_{1}^{-1}u^{-1}b_{2}} \qquad \hbox{i} \qquad (s,a_{1}^{-1}a_{2})*V(t)_{b_{1}^{-1}b_{2}}\subseteq W_{a_{1}^{-1}u^{-1}b_{2}}.
\end{equation*}

2. Якщо існує слово $v\in\lambda^*$ таке, що $a_{2}=vb_{1}$, то
\begin{equation*}
(s,a_{1}^{-1}a_{2})*(t,b_{1}^{-1}b_{2})=(s\theta^{|v|}(t),a_{1}^{-1}vb_{2}).
\end{equation*}
Нехай $W_{a_{1}^{-1}vb_{2}}$~--- довільний відкритий окіл точки $(s\theta^{|v|}(t),a_{1}^{-1}vb_{2})$ у топологічному просторі  \linebreak $(\mathscr{P}_{\lambda}(\theta,S),\tau_{\texttt{s\!t}})$, де $W$ -- відкритий окіл точки $s\theta^{|v|}(t)$ в $(S,\tau_S)$. Тоді з на\-різ\-ної неперервності напівгрупової операції в $(S,\tau_S)$ і з неперервності гомоморфізму $\theta$ випливає, що існують відкриті околи $V(s)$ i $V(t)$ точок $s$ і $t$ в $(S,\tau_S)$, відповідно, такі, що $V(s) \cdot\theta^{|v|}(t)\subseteq W$ i $s\cdot \theta^{|v|}(V(t))\subseteq W$. Тоді
\begin{equation*}
  V(s)_{a_{1}^{-1}a_{2}}*(t,b_{1}^{-1}b_{2}) \subseteq W_{a_{1}^{-1}vb_{2}} \qquad \hbox{i} \qquad (s,a_{1}^{-1}a_{2})*V(t)_{b_{1}^{-1}b_{2}}\subseteq W_{a_{1}^{-1}vb_{2}}.
\end{equation*}

3. Якщо $(s,a_{1}^{-1}a_{2})*(t,b_{1}^{-1}b_{2})=\boldsymbol{0}$, то
\begin{equation*}
V(s)_{a_{1}^{-1}a_{2}}*(t,b_{1}^{-1}b_{2})=\{\boldsymbol{0}\} \qquad \hbox{і} \qquad (s,a_{1}^{-1}a_{2})*V(t){b_{1}^{-1}b_{2}}=\{\boldsymbol{0}\}
\end{equation*}
для довільних відкритих околів $V(s)$ i $V(t)$ точок $s$ i $t$, відповідно, в $(S,\tau_S)$.

\smallskip

Також, маємо, що $V(s)_{a_{1}^{-1}a_{2}}*\{\boldsymbol{0}\}=\{\boldsymbol{0}\}$, $\{\boldsymbol{0}\}*V(s)_{a_{1}^{-1}a_{2}}=\{\boldsymbol{0}\}$ i $\{\boldsymbol{0}\}*\{\boldsymbol{0}\}=\{\boldsymbol{0}\}$ для довільного відкритого околу $V(s)$ точки $s$ в $(S,\tau_S)$.
\end{proof}

\begin{remark}\label{remark-3.2}
\begin{itemize}
  \item[$(i)$] Оскільки топологічний простір $(\mathscr{P}_{\lambda}(\theta,S),\tau_{\texttt{s\!t}})$ є топологіч\-ною сумою $\omega\cdot\lambda$ копій простору $(S,\tau_S)$ та ізольованої точки $\{\boldsymbol{0}\}$, то напівтопологічна напівгрупа $(\mathscr{P}_{\lambda}(\theta,S),\tau_{\texttt{s\!t}})$ успадковує усі властивості простору $(S,\tau_S)$, що зберігаються нескінченною топологічною сумою топологічних просторів. Зокрема, метрику $d_S$ з $S$ на $\mathscr{P}_{\lambda}(\theta,S)$ можна продовжити так:
      \begin{equation*}
        d_{\texttt{s\!t}}((s,a_{1}^{-1}a_{2}),(t,b_{1}^{-1}b_{2}))=
        \left\{
          \begin{array}{cl}
            d(s,t), & \hbox{якщо~} a_{1}^{-1}a_{2}=b_{1}^{-1}b_{2};\\
            1, & \hbox{в іншому випадку.}
          \end{array}
        \right.
      \end{equation*}
      Очевидно, що топологія, породжена метрикою $d_{\texttt{s\!t}}$ на $\mathscr{P}_{\lambda}(\theta,S)$, збігається з топологією $\tau_{\texttt{s\!t}}$.
  \item[$(ii)$] Твердження~\ref{proposition-3.1} виконується для топологічних і топологічних інверсних напівгруп.
\end{itemize}
\end{remark}

\begin{proposition}\label{proposition-3.3}
Нехай $(\mathscr{P}_{\lambda}(\theta,S),\tau)$ -- напівтопологічна напівгрупа. Тоді для довільних слів $a_{1},a_{2},b_{1},b_{2},\in \lambda^*$ топологічні підпростори $S_{a_{1}^{-1}a_{2}}$ і $S_{b_{1}^{-1}b_{2}}$ є гомео\-морф\-ними, а $S_{a^{-1}_{1}a_{1}}$ та $S_{b^{-1}_{1}b_{1}}$ -- топологічно ізоморфні піднапівгрупи в $(\mathscr{P}_{\lambda}(\theta,S),\tau)$.
\end{proposition}

\begin{proof}
Означимо відображення $\phi_{a_{1}^{-1}a_{2}}^{b_{1}^{-1}b_{2}}:\mathscr{P}_{\lambda}(\theta,S)\rightarrow\mathscr{P}_{\lambda}(\theta,S)$ та $\phi_{b_{1}^{-1}b_{2}}^{a_{1}^{-1}a_{2}}:\mathscr{P}_{\lambda}(\theta,S)\rightarrow\mathscr{P}_{\lambda}(\theta,S)$ формулами
\begin{equation*}
\phi_{a_{1}^{-1}a_{2}}^{b_{1}^{-1}b_{2}}(s)=(1_{S},b_{1}^{-1}a_{1})*s*(1_{S},a_{2}^{-1}b_{2}) \qquad \hbox{i} \qquad \phi_{b_{1}^{-1}b_{2}}^{a_{1}^{-1}a_{2}}(s)=(1_{S},a_{1}^{-1}b_{1})*s*(1_{S},b_{2}^{-1}a_{2}).
\end{equation*}
Відображення $\phi_{a_{1}^{-1}a_{2}}^{b_{1}^{-1}b_{2}}$ та $\phi_{b_{1}^{-1}b_{2}}^{a_{1}^{-1}a_{2}}$ є неперервними як композиції зсувів у напівтопологічній напівгрупі $(\mathscr{P}_{\lambda}(\theta,S),\tau)$, а також виконуються рівності $\phi_{b_{1}^{-1}b_{2}}^{a_{1}^{-1}a_{2}}(\phi_{a_{1}^{-1}a_{2}}^{b_{1}^{-1}b_{2}}(s))=s$ і $\phi_{a_{1}^{-1}a_{2}}^{b_{1}^{-1}b_{2}}(\phi_{b_{1}^{-1}b_{2}}^{a_{1}^{-1}a_{2}}(t))=t$ для довільних $s\in S_{a_{1}^{-1}a_{2}}$ і $t\in S_{b_{1}^{-1}b_{2}}$, а отже, їх звуження $\phi_{a_{1}^{-1}a_{2}}^{b_{1}^{-1}b_{2}}|_{S_{a_{1}^{-1}a_{2}}}$ i $(\phi_{b_{1}^{-1}b_{2}}^{a_{1}^{-1}a_{2}})|_{S_{a_{1}^{-1}a_{2}}}$ є гомеоморфізмами підпросторів $S_{a_{1}^{-1}a_{2}}$ і $S_{b_{1}^{-1}b_{2}}$ в $(\mathscr{P}_{\lambda}(\theta,S),\tau)$. У випадку піднапівгруп $S_{a^{-1}_{1}a_{1}}$ та $S_{b^{-1}_{1}b_{1}}$, очевидно, що відображення $\phi_{a^{-1}_{1}a_{1}}^{b^{-1}_{1}b_{1}}|_{S_{a^{-1}_{1}a_{1}}}$ є ізоморфізмом.
\end{proof}

\begin{lemma}\label{lemma-3.4}
Нехай $\tau$~--- гаусдорфова трансляційно неперервна топологія на напівгрупі $\mathscr{P}_{\lambda}(\theta,S)$. Тоді:
\begin{itemize}
  \item[$(i)$] для довільної точки $(s,\varepsilon)$ існує її відкритий окіл $V\subseteq S_{\varepsilon}$;
  \item[$(ii)$] для довільного елемента $s\in S$ та довільного слова $w\in\lambda^*$ існує відкритий окіл $V$ точки $(s,w)$ такий, що $V\subseteq S_{w}$;
  \item[$(iii)$] для довільного елемента $s\in S$ і довільного слова $w\in\lambda^*$ існує відкритий окіл $V$ точки $(s,w^{-1})$ такий, що $V\subseteq S_{w^{-1}}$;
  \item[$(iv)$] для довільного елемента $s\in S$ і довільних непорожніх слів $u,v\in\lambda^*$ існує відкритий окіл $V$ точки $(s,u^{-1}v)$, що містить лише точки вигляду $(t,x^{-1}y)$, де $x$~--- суфікс слова $u$, a $y$~--- суфікс слова $v$.
\end{itemize}
\end{lemma}

\begin{proof}
$(i)$ Зафіксуємо довільну літеру $x\in\lambda$. Тоді
\begin{equation*}
  (s,\varepsilon)*(1_s,x^{-1}x)=(\theta(s),x^{-1}x) \qquad \hbox{i} \qquad (1_s,x^{-1}x)*(s,\varepsilon)=(\theta(s),x^{-1}x).
\end{equation*}
Нехай $W$~--- відкритий окіл точки $(\theta(s),x^{-1}x)$, що не містить нуля $\boldsymbol{0}$. З нарізної неперервності напівгрупової операції в $(\mathscr{P}_{\lambda}(\theta,S),\tau)$ випливає, що існує відкритий окіл $V$ точки $(s,\varepsilon)$ такий, що $V*(1_s,x^{-1}x)\subseteq W$ i $(1_s,x^{-1}x)*V\subseteq W$. Оскільки $(1_s,x^{-1}x)$~--- ідемпотент напівгрупи $\mathscr{P}_{\lambda}(\theta,S)$, то з гаусдорфовості простору $(\mathscr{P}_{\lambda}(\theta,S),\tau)$ випливає, що
\begin{equation*}
A_x=(1_s,x^{-1}x)*\mathscr{P}_{\lambda}(\theta,S)\cup \mathscr{P}_{\lambda}(\theta,S)*(1_s,x^{-1}x)
\end{equation*}
--- замкнена підмножина в $(\mathscr{P}_{\lambda}(\theta,S),\tau)$. Звідси випливає, що, не зменшуючи загальності, можемо вважати, що  $V\subseteq \mathscr{P}_{\lambda}(\theta,S)\setminus A_x$.

Зафіксуємо довільний елемент $(t,c^{-1}d)\in V$. Зауважимо, що
\begin{equation*}
  (t,c^{-1}d)=(t,c^{-1}d)*(1_S,d^{-1}d) \qquad \hbox{i} \qquad (t,c^{-1}d)=(1_S,c^{-1}c)*(t,c^{-1}d).
\end{equation*}
З напівгрупової операції \eqref{eq-1} визначеної на $\mathscr{P}_{\lambda}(\theta,S)$ випливає, що $x\notin\operatorname{suff}(d)$ i $x\notin\operatorname{suff}(c)$. Отож, якщо $c\neq\varepsilon$ і $d\neq\varepsilon$, і врахувавши, що $x$~--- літера алфавіту $\lambda$, то отримуємо рівності
\begin{equation*}
  (1_S,d^{-1}d)*(1_S,x^{-1}x)=\boldsymbol{0} \qquad \hbox{i} \qquad (1_S,c^{-1}c)*(1_S,x^{-1}x)=\boldsymbol{0},
\end{equation*}
з яких випливає, що
\begin{equation*}
(t,c^{-1}d)*(1_S,x^{-1}x)=(t,c^{-1}d)*(1_S,d^{-1}d)*(1_S,x^{-1}x)=(t,c^{-1}d)*\boldsymbol{0}=\boldsymbol{0}\in W
\end{equation*}
і
\begin{equation*}
(1_S,x^{-1}x)*(t,c^{-1}d)=(1_S,x^{-1}x)*(1_S,c^{-1}c)*(t,c^{-1}d)=\boldsymbol{0}*(t,c^{-1}d)=\boldsymbol{0}\in W.
\end{equation*}
Отримали протиріччя. Отож $c=d=\varepsilon$, а отже, $V\subseteq S_{\varepsilon}$.

\smallskip

$(ii)$ За твердженням $(i)$ для точки $(s,\varepsilon)$ існує її відкритий окіл $W\subseteq S_{\varepsilon}$. Оскільки $(s,w)*(1_S,w^{-1})=(s,\varepsilon)$ для довільного слова $w\in\lambda^*$, то з нарізної неперервності напівгрупової операції в $(\mathscr{P}_{\lambda}(\theta,S),\tau)$ випливає, що існує відкритий окіл $V$ точки $(s,w)$ такий, що $V*(1_S,w^{-1})\subseteq W$. Якщо $(t,c^{-1}d)\in V$, то з рівності
\begin{equation*}
  (t,c^{-1}d)*(1_S,w^{-1})=
\left\{
  \begin{array}{cl}
    (t,c^{-1}d_1),                      & \hbox{якщо~} w\in\operatorname{suff^{o}}(d) \hbox{~i~} d=d_1w;\\
    (t,c^{-1}),                         & \hbox{якщо~} d=w;\\
    (\theta^{|w_1|}(t),c^{-1}w_1^{-1}), & \hbox{якщо~} d\in\operatorname{suff^{o}}(w) \hbox{~i~} w=w_1d;\\
    \boldsymbol{0}, & \hbox{в інших випадках}
  \end{array}
\right.
\end{equation*}
та включення $W\subseteq S_{\varepsilon}$ отримуємо, що $c=d_1=w_1=\varepsilon$, а отже, $d=w$. Звідки випливає, що $V\subseteq S_{w}$.

\smallskip

Доведення твердження $(iii)$  аналогічне до $(ii)$.

\smallskip

$(iv)$ Зафіксуємо довільний елемент $(s,u^{-1}v)$ напівгрупи $\mathscr{P}_{\lambda}(\theta,S)$. Очевидно, що виконуються рівності
\begin{equation*}
  (s,u^{-1}v)*(1_S,v^{-1})=(s,u^{-1}) \qquad \hbox{i} \qquad (1_S,u)*(s,u^{-1}v)=(s,v).
\end{equation*}
З тверджень $(ii)$ і $(iii)$ випливає, що існують відкриті околи $W_{(s,u^{-1})}$ i $W_{(s,v)}$ точок $(s,u^{-1})$ i $(s,v)$ в топологічному просторі $(\mathscr{P}_{\lambda}(\theta,S),\tau)$, відповідно, такі, що $W_{(s,u^{-1})}\subseteq S_{u^{-1}}$ i $W_{(s,v)}\subseteq S_{v}$. З нарізної неперервності напівгрупової операції в $(\mathscr{P}_{\lambda}(\theta,S),\tau)$ випливає, що існує відкритий окіл $V_{(s,u^{-1}v)}$ точки $(s,u^{-1}v)$ такий, що
\begin{equation*}
  V_{(s,u^{-1}v)}*(1_S,v^{-1})\subseteq W_{(s,u^{-1})} \qquad \hbox{i} \qquad (1_S,u)*V_{(s,u^{-1}v)}\subseteq W_{(s,v)}.
\end{equation*}
Зафіксуємо довільну точку $(t,x^{-1}y)\in V_{(s,u^{-1}v)}$. Тоді
\begin{equation*}
(t,x^{-1}y)*(1_S,v^{-1})=(t_1,u^{-1}) \qquad \hbox{i} \qquad (1_S,u)*(t,x^{-1}y)=(t_2,v),
\end{equation*}
для деяких $t_1,t_2\in S$.
З кожної з цих рівностей випливає, що $x$ є суфіксом слова $u$, a $y$ є суфіксом слова $v$.
\end{proof}

Для довільних слів $u,v\in\lambda^*$ позначимо
\begin{equation*}
  \mathscr{P}^{[u^{-1}v]}_{\lambda}(\theta,S)=\left\{(s,u^{-1}a^{-1}bv)\in\mathscr{P}_{\lambda}(\theta,S) \colon s\in S, \; a,b\in \lambda^*\right\}.
\end{equation*}

\begin{lemma}\label{lemma-3.5}
Нехай $\tau$~--- гаусдорфова трансляційно неперервна топологія на напівгрупі $\mathscr{P}_{\lambda}(\theta,S)$. Тоді для довільних слів $u,v\in\lambda^*$ підпростір $\mathscr{P}^{[u^{-1}v]}_{\lambda}(\theta,S)$ в $(\mathscr{P}_{\lambda}(\theta,S),\tau)$ гомеоморфний простору $(\mathscr{P}_{\lambda}(\theta,S),\tau)$.
\end{lemma}

\begin{proof}
З нарізної неперервності напівгрупової операції в $(\mathscr{P}_{\lambda}(\theta,S),\tau)$ випливає, що відображення
\begin{equation*}
f\colon \mathscr{P}_{\lambda}(\theta,S)\to \mathscr{P}^{[u^{-1}v]}_{\lambda}(\theta,S), \; x\mapsto (1_S,u^{-1})*x*(1_S,v)
\end{equation*}
i
\begin{equation*}
h\colon \mathscr{P}^{[u^{-1}v]}_{\lambda}(\theta,S)\to \mathscr{P}_{\lambda}(\theta,S), \; x\mapsto (1_S,u)*x*(1_S,v^{-1})
\end{equation*}
є неперервними. Очевидно, що $f\circ h$ i $h\circ f$~--- тотожні відображення множин $\mathscr{P}^{[u^{-1}v]}_{\lambda}(\theta,S)$ і $\mathscr{P}_{\lambda}(\theta,S)$, відповідно, звідки випливає твердження леми.
\end{proof}

З лем~\ref{lemma-3.4} i \ref{lemma-3.5} випливає наслідок \ref{corollary-3.6}.

\begin{corollary}\label{corollary-3.6}
Нехай $u,v\in\lambda^*$ i $\tau$~--- гаусдорфова трансляційно неперервна топологія на напівгрупі $\mathscr{P}_{\lambda}(\theta,S)$. Тоді:
\begin{itemize}
  \item[$(i)$] для довільної точки $(s,u^{-1}v)$ існує її відкритий окіл $V\cap\mathscr{P}^{[u^{-1}v]}_{\lambda}(\theta,S)\subseteq S_{u^{-1}v}$;
  \item[$(ii)$] для довільного елемента $s\in S$ та довільного слова $w\in\lambda^*$ існує відкритий окіл $V$ точки $(s,u^{-1}wv)$ такий, що $V\cap \mathscr{P}^{[u^{-1}v]}_{\lambda}(\theta,S)\subseteq S_{u^{-1}wv}$;
  \item[$(iii)$] для довільного елемента $s\in S$ та довільного слова $w\in\lambda^*$ існує відкритий окіл $V$ точки $(s,u^{-1}w^{-1}v)$ такий, що $V\cap \mathscr{P}^{[u^{-1}v]}_{\lambda}(\theta,S)\subseteq S_{u^{-1}w^{-1}v}$.
\end{itemize}
\end{corollary}

\begin{lemma}\label{lemma-3.7}
Нехай $\tau$~--- гаусдорфова трансляційно неперервна топологія на напівгрупі $\mathscr{P}_{\lambda}(\theta,S)$ така, що $S_{u^{-1}v}$~--- замкнена підмножина в топологічному просторі $(\mathscr{P}_{\lambda}(\theta,S),\tau)$ для деяких непорожніх слів $u,v\in\lambda^*$. Тоді  $S_{u^{-1}}$ i $S_{v}$~--- замкнені підмножини в $(\mathscr{P}_{\lambda}(\theta,S),\tau)$.
\end{lemma}

\begin{proof}
Очевидно, що для довільного елемента $s\in S$ виконується рівність
\begin{equation*}
  (s,u^{-1})*(1_S,v)=(s,u^{-1}v).
\end{equation*}
Нехай $(t,x^{-1}y)$~--- довільний елемент напівгрупи $\mathscr{P}_{\lambda}(\theta,S)$ такий, що
\begin{equation*}
(t,x^{-1}y)*(1_S,v)\in S_{u^{-1}v}.
\end{equation*}
Тоді
\begin{align*}
  (t,x^{-1}y)*(1_S,v)&=(t\cdot\theta^{|y|}(1_S),x^{-1}yv)=\\
  &=(t\cdot 1_S,x^{-1}yv)=\\
  &=(t,x^{-1}yv),
\end{align*}
а отже, $y=\varepsilon$ i $x=u$. Отож повним прообразом множини $S_{u^{-1}v}$ стосовно правого зсуву на елемент $(1_S,v)$ є множина $S_{u^{-1}}$. Оскільки зсуви в $(\mathscr{P}_{\lambda}(\theta,S),\tau)$ неперервні відображення, то за теоремою 1.4.1 з \cite{Engelking-1989}, $S_{u^{-1}}$~--- замкнена підмножина в тополо\-гіч\-ному просторі $(\mathscr{P}_{\lambda}(\theta,S),\tau)$.

Аналогічно, для довільного елемента $s\in S$ виконується рівність
\begin{equation*}
  (1_S,u^{-1})*(s,v)=(s,u^{-1}v).
\end{equation*}
Якщо $(t,x^{-1}y)$~--- довільний елемент напівгрупи $\mathscr{P}_{\lambda}(\theta,S)$ такий, що
\begin{equation*}
(1_S,u^{-1})*(t,x^{-1}y)\in S_{u^{-1}v},
\end{equation*}
то
\begin{align*}
  (1_S,u^{-1})*(t,x^{-1}y)&=(\theta^{|x|}(1_S)\cdot t,u^{-1}x^{-1}y)=\\
  &=(1_S\cdot t,u^{-1}x^{-1}y)=\\
  &=(t,u^{-1}x^{-1}y),
\end{align*}
а отже, $y=v$ i $x=\varepsilon$. Звідси випливає, що повним прообразом множини $S_{u^{-1}v}$ стосовно лівого зсуву на елемент $(1_S,u^{-1})$ є множина $S_{v}$. Далі знову з теореми 1.4.1 \cite{Engelking-1989} випливає, що $S_{v}$~--- замкнена підмножина в  $(\mathscr{P}_{\lambda}(\theta,S),\tau)$.
\end{proof}

\begin{lemma}\label{lemma-3.8}
Нехай $\tau$~--- гаусдорфова трансляційно неперервна топологія на напівгрупі $\mathscr{P}_{\lambda}(\theta,S)$ така, що $S_{u^{-1}}$~--- замкнена підмножина в топологічному просторі $(\mathscr{P}_{\lambda}(\theta,S),\tau)$  для деякого непорожнього слова $u\in\lambda^*$. Тоді $S_{\varepsilon}$~--- замкнена підмножина в $(\mathscr{P}_{\lambda}(\theta,S),\tau)$.
\end{lemma}

\begin{proof}
Очевидно, що для довільного елемента $s\in S$ виконується рівність
\begin{equation*}
  (1_S,u^{-1})*(s,\varepsilon)=(s,u^{-1}).
\end{equation*}
Нехай $(t,x^{-1}y)$~--- довільний елемент напівгрупи $\mathscr{P}_{\lambda}(\theta,S)$ такий, що
\begin{equation*}
(1_S,u^{-1})*(t,x^{-1}y)\in S_{u^{-1}}.
\end{equation*}
Тоді
\begin{align*}
  (1_S,u^{-1})*(t,x^{-1}y)&=(\theta^{|x|}(1_S)\cdot t,u^{-1}x^{-1}y)=\\
  &=(1_S\cdot t,u^{-1}x^{-1}y)=\\
  &=(t,u^{-1}x^{-1}y),
\end{align*}
а отже, $x=y=\varepsilon$. Звідси випливає, що повним прообразом множини $S_{u^{-1}}$ стосовно лівого зсуву на елемент $(1_S,u^{-1})$ є множина $S_{\varepsilon}$. Оскільки зсуви в $(\mathscr{P}_{\lambda}(\theta,S),\tau)$ неперервні відображення, то за теоремою 1.4.1 з \cite{Engelking-1989}, $S_{\varepsilon}$~--- замкнена підмножина в топологічному просторі $(\mathscr{P}_{\lambda}(\theta,S),\tau)$.
\end{proof}

\begin{lemma}\label{lemma-3.9}
Нехай $\tau$~--- гаусдорфова трансляційно неперервна топологія на напівгрупі $\mathscr{P}_{\lambda}(\theta,S)$ така, що $S_{v}$~--- замкнена підмножина в топологічному просторі $(\mathscr{P}_{\lambda}(\theta,S),\tau)$ для деякого непорожнього слова $v\in\lambda^*$. Тоді $S_{\varepsilon}$~--- замкнена підмножина в $(\mathscr{P}_{\lambda}(\theta,S),\tau)$.
\end{lemma}

\begin{proof}
З означення напівгрупової операції в $\mathscr{P}_{\lambda}(\theta,S)$ випливає, що для довільного елемента $s\in S$ виконується рівність
\begin{equation*}
  (s,\varepsilon)*(1_S,v)=(s,v).
\end{equation*}
Нехай $(t,x^{-1}y)$~--- довільний елемент напівгрупи $\mathscr{P}_{\lambda}(\theta,S)$ такий, що
\begin{equation*}
(t,x^{-1}y)*(1_S,v)\in S_{v}.
\end{equation*}
Тоді
\begin{align*}
  (t,x^{-1}y)*(1_S,v)&=(t\cdot\theta^{|y|}(1_S),x^{-1}yv)=\\
  &=(t\cdot 1_S,x^{-1}yv)=\\
  &=(t,x^{-1}yv),
\end{align*}
а отже, $x=y=\varepsilon$. Отож повним прообразом множини $S_{v}$ стосовно правого зсуву на елемент $(1_S,v)$ є множина $S_{\varepsilon}$. Оскільки зсуви в $(\mathscr{P}_{\lambda}(\theta,S),\tau)$ є неперервними, то за теоремою 1.4.1 з \cite{Engelking-1989}, $S_{\varepsilon}$~--- замкнена підмножина в топологічному просторі $(\mathscr{P}_{\lambda}(\theta,S),\tau)$.
\end{proof}

\begin{theorem}\label{theorem-3.10}
Нехай $\tau$~--- гаусдорфова трансляційно неперервна топологія на напівгрупі $\mathscr{P}_{\lambda}(\theta,S)$ така, що $S_{\varepsilon}$~--- замкнена підмножина в топологічному просторі $(\mathscr{P}_{\lambda}(\theta,S),\tau)$. Тоді $S_{u^{-1}v}$~--- замк\-нена підмножина в $(\mathscr{P}_{\lambda}(\theta,S),\tau)$ для довільних слів $u,v\in\lambda^*$.
\end{theorem}

\begin{proof}
Теорему доведемо індукцією по довжині слів $u,v\in\lambda^*$.

Спочатку доведемо, що $S_{u^{-1}v}$~--- замкнена підмножина в $(\mathscr{P}_{\lambda}(\theta,S),\tau)$ для довільних слів $u,v\in\lambda^*$, довжини яких не перевищують $1$.

Нехай $a$~--- довільна літера алфавіту $\lambda$. З означення напівгрупової операції в $\mathscr{P}_{\lambda}(\theta,S)$ випливає, що для довільного елемента $s\in S$ виконується рівність
\begin{equation*}
  (s,a)*(1_S,a^{-1})=(s\cdot 1_S,\varepsilon)=(s,\varepsilon).
\end{equation*}
Нехай $(t,x^{-1}y)$~--- довільний елемент напівгрупи $\mathscr{P}_{\lambda}(\theta,S)$ такий, що $(t,x^{-1}y)*(1_S,a^{-1})\in S_{\varepsilon}$. Тоді
\begin{equation*}
  (t,x^{-1}y)*(1_S,a^{-1})=
\left\{
  \begin{array}{cllr}
    (\theta(t)),x^{-1}a^{-1}), & \hbox{якщо~} y=\varepsilon;                         && (1_1)\\
    (t,x^{-1}),                & \hbox{якщо~} y=a;                                   && (1_2)\\
    (t,x^{-1}y_1),             & \hbox{якщо~} y=y_1a \hbox{~~та~~} y_1\neq\varepsilon; && (1_3)\\
    \boldsymbol{0},            & \hbox{в інших випадках.}                            && (1_4)
  \end{array}
\right.
\end{equation*}
Очевидно, що випадки $(1_1)$, $(1_3)$ та $(1_4)$ неможливі, а отже, виконується випадок $(1_2)$. Отож отримаємо, що $x=\varepsilon$ i $y=a$. Звідси випливає, що повним прообразом множини $S_{\varepsilon}$ стосовно правого зсуву на елемент $(1_S,a^{-1})$ є множина $S_{a}$. Оскільки зсуви в $(\mathscr{P}_{\lambda}(\theta,S),\tau)$ є неперервними, то за теоремою 1.4.1 з \cite{Engelking-1989}, $S_{a}$~--- замкнена підмножина в топологічному просторі $(\mathscr{P}_{\lambda}(\theta,S),\tau)$.

Далі, для довільної літери $b$ алфавіту $\lambda$, з означення напівгрупової операції в $\mathscr{P}_{\lambda}(\theta,S)$ випливає, що для довільного елемента $s\in S$ виконується рівність
\begin{equation*}
  (1_S,b)*(s,b^{-1})=(1_S\cdot s,\varepsilon)=(s,\varepsilon).
\end{equation*}
Нехай $(t,x^{-1}y)$~--- довільний елемент напівгрупи $\mathscr{P}_{\lambda}(\theta,S)$ такий, що
\begin{equation*}
(1_S,b)*(t,x^{-1}y)\in S_{\varepsilon}.
\end{equation*}
Тоді
\begin{equation*}
  (1_S,b)*(t,x^{-1}y)=
\left\{
  \begin{array}{cllr}
    (\theta(t)),by),  & \hbox{якщо~} x=\varepsilon;                         && (2_1)\\
    (t,y),            & \hbox{якщо~} x=b;                                   && (2_2)\\
    (t,x_1^{-1}y),    & \hbox{якщо~} x=x_1b \hbox{~~та~~} x_1\neq\varepsilon; && (2_3)\\
    \boldsymbol{0},   & \hbox{в інших випадках.}                            && (2_4)
  \end{array}
\right.
\end{equation*}
Очевидно, що випадки $(2_1)$, $(2_3)$ та $(2_4)$ неможливі, а отже, виконується випадок $(2_2)$. Отож маємо, що $x=b$ i $y=\varepsilon$. Звідси випливає, що повним прообразом множини $S_{\varepsilon}$ стосовно лівого зсуву на елемент $(1_S,b)$ є множина $S_{b^{-1}}$. Оскільки зсуви в $(\mathscr{P}_{\lambda}(\theta,S),\tau)$ є неперервними, то за теоремою 1.4.1 з \cite{Engelking-1989}, $S_{b^{-1}}$~--- замкнена підмножина в топологічному просторі $(\mathscr{P}_{\lambda}(\theta,S),\tau)$.

Нехай $a$ та $b$~--- довільні літери алфавіту $\lambda$. З означення напівгрупової операції в $\mathscr{P}_{\lambda}(\theta,S)$ випливає, що для довільного елемента $s\in S$ виконується рівність
\begin{equation*}
  (1_S,b)*(s,b^{-1}a)=(s,a).
\end{equation*}
Нехай $(t,x^{-1}y)$~--- довільний елемент напівгрупи $\mathscr{P}_{\lambda}(\theta,S)$ такий, що
\begin{equation*}
(1_S,b)*(t,x^{-1}y)\in S_{a}.
\end{equation*}
Розглянемо два випадки $a=b$ й $a\neq b$. Припустимо, що $a=b$. Тоді
\begin{equation*}
  (1_S,a)*(t,x^{-1}y)=
\left\{
  \begin{array}{cllr}
    (\theta(t)),ay),  & \hbox{якщо~} x=\varepsilon;                         && (3_1)\\
    (t,y),            & \hbox{якщо~} x=a;                                   && (3_2)\\
    (t,x_1^{-1}y),    & \hbox{якщо~} x=x_1a \hbox{~~та~~} x_1\neq\varepsilon; && (3_3)\\
    \boldsymbol{0},   & \hbox{в інших випадках.}                            && (3_4)
  \end{array}
\right.
\end{equation*}
Очевидно, що випадки $(3_3)$ та $(3_4)$ неможливі, а отже, виконуються випадки $(3_1)$ та $(3_2)$. У випадку $(3_1)$ матимемо, що $x=y=\varepsilon$ та у випадку $(3_2)$ маємо, що $x=y=a$. Звідси випливає, що повним прообразом множини $S_{a}$ стосовно лівого зсуву на елемент $(1_S,a)$ є множина $S_{a^{-1}a}\cup S_{\varepsilon}$. З неперервності зсувів у $(\mathscr{P}_{\lambda}(\theta,S),\tau)$ та теореми 1.4.1 \cite{Engelking-1989} випливає, що $S_{a^{-1}a}\cup S_{\varepsilon}$~--- замкнена підмножина в $(\mathscr{P}_{\lambda}(\theta,S),\tau)$. За лемою \ref{lemma-3.4}$(i)$, $S_{\varepsilon}$~--- відкрита підмножина в $(\mathscr{P}_{\lambda}(\theta,S),\tau)$, а  отже, $S_{a^{-1}a}$~--- замкнена підмножина в топологічному просторі $(\mathscr{P}_{\lambda}(\theta,S),\tau)$.

Припустимо, що $a\neq b$. Тоді
\begin{equation*}
  (1_S,b)*(t,x^{-1}y)=
\left\{
  \begin{array}{cllr}
    (\theta(t)),by),  & \hbox{якщо~} x=\varepsilon;                         && (4_1)\\
    (t,y),            & \hbox{якщо~} x=b;                                   && (4_2)\\
    (t,x_1^{-1}y),    & \hbox{якщо~} x=x_1b \hbox{~~та~~} x_1\neq\varepsilon; && (4_3)\\
    \boldsymbol{0},   & \hbox{в інших випадках.}                            && (4_4)
  \end{array}
\right.
\end{equation*}
Очевидно, що випадки $(4_1)$, $(4_3)$ та $(4_4)$ неможливі, а отже, виконується випадок $(4_2)$. Отож маємо, що $x=b$ i $y=a$. Звідси випливає, що повним прообразом множини $S_{a}$ стосовно лівого зсуву на елемент $(1_S,b)$ є множина $S_{b^{-1}a}$. Оскільки зсуви в $(\mathscr{P}_{\lambda}(\theta,S),\tau)$ є неперервними відображеннями, то за теоремою 1.4.1 з \cite{Engelking-1989}, $S_{b^{-1}a}$~--- замкнена підмножина в топологічному просторі $(\mathscr{P}_{\lambda}(\theta,S),\tau)$.

Тепер доведемо крок індукції: з того, що $S_{u^{-1}v}$~--- замкнена підмножина в $(\mathscr{P}_{\lambda}(\theta,S),\tau)$ для довільних слів $u,v\in\lambda^*$ довжини $<k$ випливає, що $S_{w^{-1}z}$~--- замк\-не\-на підмножина в $(\mathscr{P}_{\lambda}(\theta,S),\tau)$ для довільних слів $w,z\in\lambda^*$ довжини $\leqslant k$.

Нехай $u\in\lambda^*$~--- слово довжини $k$ i $v\in\lambda^*$~--- слово довжини $<k$. Нехай $b\in\lambda$~--- остання літера слова $u$, тобто $u=u_1 b$ для деякого слова $u_1\in\lambda^*$ довжини $k-1$. Тоді
\begin{equation*}
  (1_S,b)*(s,u^{-1}v)=(1_S,b)*(s,b^{-1}u_1^{-1}v)=(s,u_1^{-1}v).
\end{equation*}
Нехай $(t,x^{-1}y)$~--- довільний елемент напівгрупи $\mathscr{P}_{\lambda}(\theta,S)$ такий, що
\begin{equation*}
(1_S,b)*(t,x^{-1}y)\in S_{u_1^{-1}v}.
\end{equation*}
Тоді
\begin{equation*}
  (1_S,b)*(t,x^{-1}y)=
\left\{
  \begin{array}{cllr}
    (\theta(t)),by),  & \hbox{якщо~} x=\varepsilon;                         && (5_1)\\
    (t,y),            & \hbox{якщо~} x=b;                                   && (5_2)\\
    (t,x_1^{-1}y),    & \hbox{якщо~} x=x_1b \hbox{~~та~~} x_1\neq\varepsilon; && (5_3)\\
    \boldsymbol{0},   & \hbox{в інших випадках.}                            && (5_4)
  \end{array}
\right.
\end{equation*}
Оскільки слово $u_1\in\lambda^*$ має довжину $k-1$, то випадки $(5_1)$, $(5_2)$ і $(5_4)$ неможливі, а отже, виконується випадок $(5_3)$. Отож маємо, що $x=u_1b$ i $y=v$. Звідси випливає, що повним прообразом множини $S_{u_1^{-1}v}$ стосовно лівого зсуву на елемент $(1_S,b)$ є множина $S_{u^{-1}v}$. Оскільки зсуви в $(\mathscr{P}_{\lambda}(\theta,S),\tau)$ є неперервними відображеннями, то за теоремою 1.4.1 з \cite{Engelking-1989}, $S_{u^{-1}v}$~--- замкнена підмножина в тополо\-гіч\-ному просторі $(\mathscr{P}_{\lambda}(\theta,S),\tau)$.

Нехай $u\in\lambda^*$~--- слово довжини $\leqslant k$ i $v\in\lambda^*$~--- слово довжини $k$. Нехай $a\in\lambda$~--- остання літера слова $v$, тобто $v=v_1 a$ для деякого слова $v_1\in\lambda^*$ довжини $k-1$. Тоді
\begin{align*}
  (s,u^{-1}v)*(1_S,a^{-1})&=(s,u^{-1}v_1 a)*(1_S,a^{-1})=\\
  &=(s\cdot 1_S,u^{-1}v_1)=\\
  &=(s,u^{-1}v_1).
\end{align*}
Нехай $(t,x^{-1}y)$~--- довільний елемент напівгрупи $\mathscr{P}_{\lambda}(\theta,S)$ такий, що
\begin{equation*}
(t,x^{-1}y)*(1_S,a^{-1})\in S_{u^{-1}v_1}.
\end{equation*}
Тоді
\begin{equation*}
  (t,x^{-1}y)*(1_S,a^{-1})=
\left\{
  \begin{array}{cllr}
    (\theta(t)),x^{-1}a^{-1}), & \hbox{якщо~} y=\varepsilon;                         && (6_1)\\
    (t,x^{-1}),                & \hbox{якщо~} y=a;                                   && (6_2)\\
    (t,x^{-1}y_1),             & \hbox{якщо~} y=y_1a \hbox{~~та~~} y_1\neq\varepsilon; && (6_3)\\
    \boldsymbol{0},            & \hbox{в інших випадках.}                            && (6_4)
  \end{array}
\right.
\end{equation*}
Оскільки слово $v_1\in\lambda^*$ має довжину $k-1$, то випадки $(6_1)$, $(6_2)$ та $(6_4)$ неможливі, а отже, виконується випадок $(6_3)$. Отож маємо, що $x=u$ i $y=v_1a$. Звідси випливає, що повним прообразом множини $S_{u^{-1}v_1}$ стосовно правого зсуву на елемент $(1_S,a^{-1})$ є множина $S_{u^{-1}v}$. Оскільки зсуви в $(\mathscr{P}_{\lambda}(\theta,S),\tau)$ є неперервними відображеннями, то за теоремою 1.4.1 з \cite{Engelking-1989}, $S_{u^{-1}v}$~--- замкнена підмножина в тополо\-гіч\-ному просторі $(\mathscr{P}_{\lambda}(\theta,S),\tau)$. Це завершує доведення кроку індукції, а отже, виконується тверд\-жен\-ня теореми.
\end{proof}

Нехай $(S,\tau_S)$~--- напівтопологічний моноїд і $\theta\colon S\rightarrow H_{S}(1)$ --- неперервний гомоморфізм з $S$ у його групу одиниць $H_{S}(1)$. Будемо говорити, що напівтопологічний моноїд $(\mathscr{P}_{\lambda}(\theta,S),\tau_{\mathscr{P}_{\lambda}})$ є \emph{тополо\-гіч\-ним $\lambda$-поліциклічним розширенням Брука-Рейлі} моноїда $(S,\tau_S)$ із визначеним гомоморфізмом $\theta$ в класі напівтопологічних напівгруп $\mathfrak{STS}$, якщо відображення $\Upsilon\colon (S,\tau_S)\to (\mathscr{P}_{\lambda}(\theta,S),\tau_{\mathscr{P}_{\lambda}})$, означене за формулою $\Upsilon\colon s\mapsto (s,\varepsilon)$, є тополого-алгебричним вкладенням і $(S,\tau_S),(\mathscr{P}_{\lambda}(\theta,S),\tau_{\mathscr{P}_{\lambda}})\in\mathfrak{STS}$. Якщо  напівтопологічна напівгрупа $(S,\tau_S)$ не містить одиниці, то приєднавши одиницю $1_S$ до $(S,\tau_S)$ як ізольовану точку та означивши гомоморфізм $\theta\colon S\rightarrow H_{S}(1), s\mapsto 1$, ми аналогічно, як і в \cite{Gutik-1994}, отримуємо \emph{тополо\-гіч\-не $\lambda$-поліциклічне розширенням Брука} моноїда $(S,\tau_S)$. Зауважимо, що з твердження \ref{proposition-3.3} випливає, що топологічний ізоморфізм  вкладення $\Upsilon\colon (S,\tau_S)\to (\mathscr{P}_{\lambda}(\theta,S),\tau_{\mathscr{P}_{\lambda}})$ можна визначати й  за формулою $\Upsilon\colon s\mapsto (s,w^{-1}w)$, де $w$~--- довільне слово вільного моноїда $\lambda^*$.

З твердження \ref{proposition-3.1} випливає, що для довільної напівтопологічної напівгрупи $(S,\tau_S)$ існує тополо\-гіч\-не $\lambda$-поліциклічне розширення Брука-Рейлі $(\mathscr{P}_{\lambda}(\theta,S),\tau)$ напівгрупи $(S,\tau_S)$ із визначеним гомоморфізмом $\theta$ в класі напівтопологічних напівгруп таке, що для довільних слів $u$ та $v$ вільного моноїда $\lambda^*$ підмножина $S_{u^{-1}v}$ відкрито-замкнена в топологічному просторі $(\mathscr{P}_{\lambda}(\theta,S),\tau)$ і нуль $\boldsymbol{0}$ є ізольованою точкою цього простору. Тому природно виникає таке запитання: \emph{за яких умов на напівтополо\-гіч\-ну напівгрупу $(S,\tau_S)$ усі тополо\-гіч\-ні $\lambda$-поліциклічні розширення Брука-Рейлі $(\mathscr{P}_{\lambda}(\theta,S),\tau)$ напівгрупи $(S,\tau_S)$ мають одну, чи обидві з цих вище перелічених властивостей?}

На завершенні ми даємо часткову відповідь на запитання: за яких умов підмножина $S_{u^{-1}v}$ відкрито-замкнена в довільному тополо\-гіч\-ному $\lambda$-розширенні Брука-Рейлі $(\mathscr{P}_{\lambda}(\theta,S),\tau)$ напівтопологічної напівгрупи $(S,\tau_S)$?

Нагадаємо, що топологічний простір $X$ називається \emph{компактним}, якщо кожне відкрите по\-крит\-тя простору $X$ містить скінченне підпокриття.

Напівтопологічна напівгрупа $S$ називається  $H$-замкненою в класі гаусдорфових напівтополо\-гіч\-них напівгруп $\mathfrak{STS}$, якщо $S\in\mathfrak{STS}$ і кожна напівтопологічна напівгрупа $T\in\mathfrak{STS}$, що містить напівгрупу $S$,  містить напівгрупу $S$ як замкнений підпростір \cite{Gutik-2014}.

\begin{theorem}\label{theorem-3.11}
Нехай $(S,\tau_S)$~--- гаусдорфовий напівтопологічний моноїд, $\theta\colon S\rightarrow H_{S}(1)$ --- неперервний гомоморфізм і $(\mathscr{P}_{\lambda}(\theta,S),\tau)$ --- тополо\-гіч\-не $\lambda$-поліциклічне роз\-ши\-рен\-ня Брука-Рейлі напівгрупи $(S,\tau_S)$ в класі гаусдорфових напівтопологічних напівгруп. Якщо $(S,\tau_S)$ містить лівий (правий, дво\-біч\-ний) ідеал, який є $H$-замкненою напівгрупою в класі гаусдорфових напівтополо\-гіч\-них напівгруп, то для довільних слів $u$ та $v$ вільного моноїда $\lambda^*$ підмножина $S_{u^{-1}v}$ відкрито-замкнена в тополо\-гіч\-ному просторі $(\mathscr{P}_{\lambda}(\theta,S),\tau)$.
\end{theorem}

\begin{proof}
За теоремою \ref{theorem-3.10} нам достатньо довести, що  $S_{\varepsilon}$~--- замкнена підмножина в топологічному просторі $(\mathscr{P}_{\lambda}(\theta,S),\tau)$, оскільки в цьому випадку з леми \ref{lemma-3.4} випливає, що всі множини вигляду $S_{u^{-1}v}$ відкриті в топологічному просторі $(\mathscr{P}_{\lambda}(\theta,S),\tau)$.

Припустимо, що напівгрупа $(S,\tau_S)$ містить лівий ідеал $I$, що є $H$-замкненою напівгрупою в класі гаусдорфових напівтополо\-гіч\-них напівгруп. Тоді
\begin{equation*}
I_\varepsilon=\left\{(s,\varepsilon)\colon s\in I\right\}
\end{equation*}
є замкненою піднапівгрупою в напівтопологічній напівгрупі $(\mathscr{P}_{\lambda}(\theta,S),\tau)$. Зафіксуємо довільний елемент $(s_0,\varepsilon)\in I_\varepsilon$. Очевидно, що $(t,\varepsilon)*(s_0,\varepsilon)\in I_\varepsilon$ для довільного елемента $t\in S$.
Нехай $(t,x^{-1}y)$~--- довільний елемент напівгрупи $\mathscr{P}_{\lambda}(\theta,S)$ такий, що $(t,x^{-1}y)*(s_0,\varepsilon)\in S_{\varepsilon}$.
Тоді
\begin{equation*}
  (t,x^{-1}y)*(s_0,\varepsilon)=
\left\{
  \begin{array}{cllr}
    (ts_0,\varepsilon),          & \hbox{якщо~} x=y=\varepsilon;                                && (7_1)\\
    (ts_0,x^{-1}),               & \hbox{якщо~} x\neq\varepsilon \hbox{~~i~~} y=\varepsilon;    && (7_2)\\
    (t\theta^{|y|}(s),x^{-1}y),  & \hbox{якщо~} x\neq\varepsilon \hbox{~~i~~} y\neq\varepsilon. && (7_3)
  \end{array}
\right.
\end{equation*}
Очевидно, що випадки $(7_2)$ та $(7_3)$ неможливі, а отже, виконується випадок $(7_1)$. Отож маємо, що $x=y=\varepsilon$. Звідси випливає, що повним прообразом множини $I_\varepsilon$ стосовно лівого зсуву на елемент $(s_0,\varepsilon)$ є множина $S_{\varepsilon}$. Оскільки зсуви в $(\mathscr{P}_{\lambda}(\theta,S),\tau)$ є неперервними відображеннями, то за теоремою 1.4.1 з \cite{Engelking-1989}, $S_{\varepsilon}$~--- замкнена підмножина в топологічному просторі $(\mathscr{P}_{\lambda}(\theta,S),\tau)$. Далі скористаємося теоре\-мою~\ref{theorem-3.10}.

Доведення у випадку правого, чи двобічного ідеалу, аналогічне.
\end{proof}

З теоре\-ми~\ref{theorem-3.11} випливає наслідок \ref{corollary-3.12}.

\begin{corollary}\label{corollary-3.12}
Нехай $(S,\tau_S)$~--- гаусдорфовий напівтопологічний моноїд, $\theta\colon S\rightarrow H_{S}(1)$ --- неперервний гомоморфізм і $(\mathscr{P}_{\lambda}(\theta,S),\tau)$ --- тополо\-гіч\-не $\lambda$-поліциклічне розширення Брука-Рейлі напівгрупи $(S,\tau_S)$ в класі гаусдорфових напівтопологічних напівгруп. Якщо $(S,\tau_S)$ містить компактний лівий (правий, дво\-біч\-ний) ідеал,  то для довільних слів $u$ та $v$ вільного моноїда $\lambda^*$ підмножина $S_{u^{-1}v}$ відкрито-замкнена в топологічному просторі $(\mathscr{P}_{\lambda}(\theta,S),\tau)$.
\end{corollary}

\begin{theorem}\label{theorem-3.13}
Нехай $(S,\tau_S)$~--- гаусдорфовий топологічний інверсний моноїд, $\theta\colon S\rightarrow H_{S}(1)$ --- неперервний гомоморфізм і $(\mathscr{P}_{\lambda}(\theta,S),\tau)$ --- тополо\-гіч\-не $\lambda$-поліциклічне розширення Брука-Рейлі напівгрупи $(S,\tau_S)$ в класі гаусдорфових топологічних інверсних напівгруп. Якщо напівгрупа $S$ містить мінімальний ідемпотент, то для довільних слів $u$ та $v$ вільного моноїда $\lambda^*$ підмножина $S_{u^{-1}v}$ відкрито-замкнена в топологічному просторі $(\mathscr{P}_{\lambda}(\theta,S),\tau)$.
\end{theorem}

\begin{proof}
Оскільки в топологічній інверсній напівгрупі $S$ інверсія та напівгрупова операція неперервні, то кожна максимальна підгрупа в $S$, а отже, і кожен $\mathscr{H}$-клас, є замкненою підмножиною в $S$ \cite{Eberhart-Selden-1969}. Звідси випливає, якщо $e_0$~--- мінімальний ідемпотент напівгрупи $S$, то максимальна підгрупа $H(e_0,\varepsilon)$ в $\mathscr{P}_{\lambda}(\theta,S)$, що містить ідемпотент $(e_0,\varepsilon)$, є замкненою підмножиною в топологічному просторі $(\mathscr{P}_{\lambda}(\theta,S),\tau)$. Далі доведення аналогічне до теореми~\ref{theorem-3.11}.
\end{proof}

\begin{remark}
Напівгрупа Брука--Рейлі $\mathscr{B}(S,\theta)$ над моноїдом $S$ (див. \cite[підрозділ~II.5]{Petrich1984}) є, очевидно, піднапівгрупою в $\mathscr{P}_{\lambda}(\theta,S)$. Зауважимо, що з теорем~\ref{theorem-3.10} і~\ref{theorem-3.11} випливають основні результати, отримані в працях \cite{Gutik-1994, Gutik-1997, Gutik-Pavlyk=2009} для топологічних розширень Брука--Рейлі топологічних і напівтопологічних напівгруп.
\end{remark}
%%%%%%%%%%%%%%%%%%%%%%%%%%%%%%%%%%%%%%%%%%%%%%%%%%%%%%%%%%%%%%%%%%%%%%%%%%%%%%%%%%

%\bigskip

\section*{\textbf{Подяка}}

Автори висловлюють щиру подяку  рецензентові за цінні поради.
%%%%%%%%%%%%%%%%%%%%%%%%%%%%%%%%%%%%%%%%%%%%%%%%%%%%%%%%%%%%%%%%%%%%%%%%%%%%%%%%%%

%\vskip1cm

\begin{thebibliography}{10}

\bibitem{Gutik-1994}
О. В. Гутик,
\emph{Вложениe топологических полугрупп},
Мат. Студії \textbf{3} (1994), 10--14.

\bibitem{Gutik-1997}
О. В. Гутік,
\emph{Про ослаблення топології прямої суми на напівгрупі Брака},
Вісник Львів. ун-ту. Сер. мех.-мат. \textbf{47} (1997), 17--21.

\bibitem{Andersen-1952}
O. Andersen,
\emph{Ein Bericht uber die Struk\-tur abstrakter Halbgruppen},
PhD Thesis. Ham\-burg, 1952.

\bibitem{Bardyla-Gutik-2016}
  S. Bardyla and O. Gutik,
  \emph{On a semitopological polycyclic monoid},
  Algebra Discr. Math. \textbf{21} (2016), no. 2, 163--183.

\bibitem{Bruck-1958}
  R. J. Bruck,
  \emph{A survey of binary systems},
Berlin-G\"{o}ttingen-Heidelberg, VII, Ergebn. Math. \textbf{20} (1958), 185S.

\bibitem{Carruth-Hildebrant-Koch-1983}
J.~H.~Carruth, J.~A.~Hildebrant, and  R.~J.~Koch,
\emph{The theory of topological semigroups}, Vol. I, Marcel
Dekker, Inc., New York and Basel, 1983.

\bibitem{Carruth-Hildebrant-Koch-1986}
J.~H.~Carruth, J.~A.~Hildebrant, and  R.~J.~Koch,
\emph{The theory of topological semigroups},  Vol. II, Marcel Dekker,
Inc., New York and Basel, 1986.

\bibitem{Clifford-Preston-1961}
A.~H.~Clifford and G.~B.~Preston,
\emph{The algebraic theory of semigroups}, Vol. I,
Amer. Math. Soc. Surveys {\bf 7}, Providence, R.I.,  1961.

\bibitem{Clifford-Preston-1967}
A.~H.~Clifford and G.~B.~Preston,
\emph{The algebraic theory of semigroups}, Vol.  II,
Amer. Math. Soc. Surveys {\bf 7}, Pro\-vi\-den\-ce, R.I.,   1967.

\bibitem{Eberhart-Selden-1969}
C. Eberhart and J. Selden,
\emph{ On the closure of the bicyclic semigroup},
Trans. Amer. Math. Soc. {\bf 144} (1969), 115--126. DOI: 10.1090/S0002-9947-1969-0252547-6

\bibitem{Engelking-1989}
R.~Engelking,
\emph{General topology}, 2nd ed., Heldermann, Berlin, 1989.

\bibitem{Green-1951}
J. A. Green,
\emph{On the structure of semigroups},
Ann. Math. Ser. 2 \textbf{54} (1951), no.~1, 163--172. DOI: 10.2307/1969317

\bibitem{Gutik-2014}
O. Gutik,
\emph{On closures in semitopological inverse semigroups with continuous inversion},
Algebra Discrete Math. \textbf{18} (2014), no. 1, 59--85.

\bibitem{Gutik-2018}
O. Gutik,
\emph{On locally compact semitopological $0$-bisimple inverse $\omega$-semigroups},
Topol. Al\-geb\-ra Appl. \textbf{6} (2018), 77--101.
DOI: 10.1515/taa-2018-0008

\bibitem{Gutik-Pavlyk=2009}
O. V. Gutik and K. P. Pavlyk,
\emph{Bruck-Reilly extension of a semitopological semigroups},
Прик\-ладні проблеми механіки і математики \textbf{7} (2009), 66--72.

\bibitem{PKhylynskyi-Gutik=2019}
P. Khylynskyi and O. Gutik,
On Bruck-Reilly $\lambda$-extensions of semigroups,
The XII International Algebraic Conference in Ukraine, July 02--06, 2019, Vinnytsia, Ukraine. Abstracts. Vinnytsia, 2019, P. 51.

\bibitem{Lawson-1998}
M.~Lawson,
\emph{Inverse semigroups. The theory of partial symmetries},
Singapore: World Scientific, 1998.

\bibitem{Lawson-1999}
  M. V. Lawson,
  \emph{The structure of 0-E-unitary inverse semigroups I: the monoid case},
  Proc. Edinburgh Math. Soc. \textbf{42} (1999), no.~3, 497--520.
  DOI: 10.1017/S0013091500020484

\bibitem{Nivat-Perrot-1970}
  M. Nivat and J.-F. Perrot,
  \emph{Une generalisation du monoide bicyclique},
C. R. Acad. Sci., Paris, S\'{e}r. A \textbf{271} (1970), 824--827.

\bibitem{Petrich1984}
M.~Petrich,
\emph{Inverse semigroups},
John Wiley $\&$ Sons, New York, 1984.

\bibitem{Reilly=1966}
N. R. Reilly,
\emph{Bisimple $\omega$-semigroups},
Proc. Glasgow Math. Assoc. \textbf{7} (1966), no. 3, 160--169. \linebreak
DOI: 10.1017/S2040618500035346

\bibitem{Ruppert-1984}
W.~Ruppert,
\emph{Compact semitopological semigroups: an intrinsic theory},
Lect. Notes Math., \textbf{1079}, Springer, Berlin, 1984.
DOI: 10.1007/BFb0073675

\bibitem{Saito-1965}
T.~Sait\^{o},
\emph{Proper ordered inverse semigroups},
Pac. J. Math. \textbf{15} (1965), no. 2, 649--666. \linebreak
DOI: 10.2140/pjm.1965.15.649

\bibitem{Selden=1975}
A. A. Selden,
\emph{Bisimple $\omega$-semigroups in the locally compact setting},
Bogazici Univ. J. Sci. Math. \textbf{3} (1975), 15--77.

\bibitem{Selden=1976}
A. A. Selden,
\emph{On the closure of bisimple $\omega$-semigroup},
Semigroup Forum \textbf{12} (1976), no. 3, 373--379.
DOI: 10.1007/BF02195943

\bibitem{Selden=1977}
A. A. Selden,
\emph{The kernel of the determining endomorphism of a bisimple $\omega$-semigroup},
Semigroup Forum \textbf{14} (1977), no. 3, 265--271.
DOI: 10.1007/BF02194671

\bibitem{Szendrei-1987}
M. B. Szendrei,
\emph{A generalisation of McAlister's P-theorem for E-unitary regular semigroups},
Acta Sci. Math. (Szeged) \textbf{51} (1987), no. 1--2,  229--249.


\bibitem{Warne=1966}
R. J. Warne,
\emph{A class of bisimple inverse semigroups},
Pac. J. Math. \textbf{18} (1966), no. 3, 563--577.
DOI: 10.1215/S0012-7094-67-03481-3

\end{thebibliography}
\end{document}